\author{Ammar Hussain}
\affil{Department of Mathematics, Lahore University of Management Sciences, Lahore, Pakistan}
\affil{22100150@lums.edu.pk}
\title{Existence Conditions for Dilation Families}
\date{}
\begin{document}  

\maketitle

\newcommand{\al}{\alpha}
\newcommand{\be}{\beta} 
\newcommand{\ga}{\gamma}
\newcommand{\Ga}{\Gamma}
\newcommand{\de}{\delta}
\newcommand{\eps}{\epsilon}
\newcommand{\la}{\lambda}
\newcommand{\La}{\Lambda}
\newcommand{\om}{\omega}
\newcommand{\sig}{\sigma}
\newcommand{\thr}{\theta}
\newcommand{\fa}{\;\forall\;}
\newcommand{\ri}{\rightarrow}
\newcommand{\Ri}{\Rightarrow}
\newcommand{\charac}{\textrm{char}}
\newcommand{\ma}{\mathbb}
\newcommand{\ti}{\times}
\newcommand{\cd}{\cdot}
\newcommand{\nf}{\infty}
\newcommand{\nul}{\emptyset}
\newcommand{\tb}{\textbf}
\newcommand{\bs}{\boldsymbol}
\newcommand{\ayy}{\mathcal{A}}
\newcommand{\con}{\mathcal{C}}
\newcommand{\tr}{\textrm}
\newtheorem{prop}{Proposition}[section]
\newtheorem{ex}{Example}[section]
\newtheorem{defi}{Definition}[section]
\newtheorem{co}{Corollary}[prop]
\newtheorem{lem}[prop]{Lemma}
\newcommand{\uup}{\underline{\underline{Proof:}}\\\\}  

\begin{abstract}This article describes a structure that metric spaces can be equipped with so that they resemble normed vector spaces and examines necessary and sufficient conditions for the existence of such a structure on a general metric space.\end{abstract}

\section{Introduction}
Many ideas and structures in mathematics emerge as a way of filling the space 'between' concepts that have already been established and studied thoroughly. A classic example of this is the concept of uniform spaces, that came about as an attempt to generalize notions of 'uniformity' to a class of topological spaces beyond just metric spaces.\\

The concept of a metric space with a dilation family constitutes a similar sort of bridge between already established concepts. Specifically, it is a structure that is stronger than a metric space but weaker than a normed real vector space. Given a metric space (not necessarily equipped with any inner algebraic operation), a dilation family on the space is like an action by the nonnegative reals on the space that satisfies an analog of the homogeneity property of a vector norm (i.e. that $\bs{||}a \vec{v}\bs{||}=|a|\;\bs{||}\vec{v}\bs{||}$ always). Similar concepts have been explored already in papers by Marius Buliga, such as that of a 'dilatation structure' which is somewhere "between a group and a differential structure" \cite{dilatations}. One structure already defined by M. Buliga that is very similar to that of a metric space with a dilation family is a 'normed conical group' \cite{intrinsic}. A normed conical group is essentially a normed vector space where the vector addition operation is allowed to be noncommutative. In this sense, it is a stronger structure than a metric space with a dilation family. We will show that an exact condition for a metric group with a dilation family can be formulated to determine whether or not a norm exists that makes it a normed conical group.\\

As for determining whether a metric space can be equipped with a dilation family, we will demonstrate that any metrizable cone space can be equipped with a metric that preserves its topology and under which a dilation family exists about the cone's center. We will then go on to show that the existence of a special kind of null-homotopy on a locally compact metric space guarantees that the space is equivalent to a metrizable cone, meaning that it can be equipped with a dilation family (there are two cases here, depending on the size of the dilation family). Lastly, we will provide one result that gives a necessary and sufficient characterization for metric spaces to possess dilation families - this characterization relies on the existence of a special family of bi-Lipschitz maps on a metric space.\\ 

\tb{Acknowledgement:} I would like to thank Dr. Marius Buliga for pointing me to research relevant to this topic and my undergraduate Senior Project supervisor, Dr. Haniya Azam, for encouraging me to pursue this topic and giving me important advice during my first time working on an independent project.\\ 
 
\section{Dilation Families, Properties, and Characterization Results}

This section looks at some properties of dilation families and demonstrates some conditions on a metric space necessary and/ or sufficient for the existence of an associated dilation family.\\

\begin{defi} Let $(X,d_1)$ and $(Y,d_2)$ be a pair of metric spaces. Let $\al$ be a nonnegative real number. If $h:X\ri Y$ is a map such that $d_2(h(x_1),h(x_2))=\al d_1(x_1,x_2), \fa x_1,x_2\in X$, then $h$ is said to be a dilation from $(X,d_1)$ to $(Y,d_2)$ of scale $\al$.\\ \end{defi}

It is trivial to see that, just like isometries, all dilations are continuous and all dilations of positive scale are injective.\\

The central object of study here will be something I refer to as a 'dilation family'. Given a metric space $(X,d)$ and an index set $I\subset \ma R_{\geq 0}$ this is an indexed set of functions $\{T_{\al}\}_{\al\in I}\subset X^X$ that 'scale distances exactly' about a fixed point $x_0\in X$, and satisfy the property that $T_{\al}\circ T_{\be}=T_{\al\be}$ always.\\ 

More precisely, we have the following definition:\\

\begin{defi} Let $(X,d)$ be a metric space. Let $x_0$ be a point in $X$. Given any index set $I$ closed under multiplication and any collection $\ayy=\{T_{\al}\}_{\al\in I}\subset X^X$, we call $\ayy$ a dilation family in $X$ about $x_0$ if:\\ 

$1)\; T_{\al}$ is a dilation under $d$ of scale $\al, \fa \al \in I$\\
$2)\; T_{\al}(x_0)=x_0, \fa \al\in I$;\\
$3)\; T_{\al}\circ T_{\be}=T_{\al\be}, \fa \al,\be\in I$; and\\
$4)\; \lim_{\al\ri 1} T_{\al}(x)$ exists $\fa x\in X$\\ \end{defi}

We are mainly concerned with dilation families where the index set is a 'pure' index set, as described below:

\begin{defi} Let $I$ be a subset of $\ma R_{\geq 0}$ closed under multiplication containing $1$. If - $1)\; I\subset [1,\nf)$ and for any $\al<\be\in I, \frac{\be}{\al}\in I$; or $2)\; I\subset [0,1]$ and for any $\al<\be\in I, \frac{\al}{\be}\in I$; or $3)\; I$ is closed under nonzero division - then $I$ is called a pure set.\\ \end{defi}

 Regarding the $4$th condition in the definition of a dilation family — this is a, seemingly weak, kind of continuity requirement on the dilation family. As we will see, these structures represent a strengthening of classical contractibility conditions that appear in general topology. However, they may also be relevant to the study of certain kinds of metric measures, such as many Hausdorff measures, in which a dilation of a measurable set by scale $\al$ yields a measurable set with a measure scaled by a factor of $\al^d$ ($d$ being the dimension of the measure). It also allows one to define (once suitably extended) differentiability of a function at the 'center' of the dilation family.\\

 We will start off with some results that just establish basic properties of dilation families and necessary conditions for their existence.\\

\begin{prop}
\label{prop: if 1 or 0 is in I} Let $(X,d)$ be a metric space. Let $\{T_{\al}\}_{\al\in I}$ be a dilation family on $(X,d,x_0)$. Then $T_1=\tr{Id}_X$. If $0\in I$, $T_0=O_{x_0}$.\end{prop}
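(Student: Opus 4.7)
The plan is to exploit the composition identity $T_\al\circ T_\be=T_{\al\be}$ at $\al=\be=1$ (respectively at $\al=\be=0$) together with the scale-$\al$ dilation property to pin down $T_1$ and $T_0$ algebraically. Neither of these arguments needs to invoke condition (4) or involve limits; everything falls out of conditions (1) and (3) plus the fixed-point condition (2).

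For the claim $T_1=\tr{Id}_X$, I would first observe that $T_1$ is a scale-$1$ dilation, hence an isometry of $(X,d)$, and that by condition (3) it is idempotent: $T_1\circ T_1=T_{1\cd 1}=T_1$. Pick any $x\in X$ and set $y=T_1(x)$. Idempotence gives $T_1(y)=y$. Applying the isometry property to the pair $(x,y)$ yields
\[
d(x,y)=d(T_1(x),T_1(y))=d(y,y)=0,
\]
so $x=y=T_1(x)$. Since $x$ was arbitrary, $T_1=\tr{Id}_X$. (Fixing $x_0$ via condition (2) is not even needed for this part, though it is consistent with the conclusion.)

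For the claim about $T_0$ when $0\in I$, I would just read off condition (1) at scale $\al=0$: for all $x,y\in X$,
\[
d(T_0(x),T_0(y))=0\cd d(x,y)=0,
\]
so $T_0$ is a constant map. Condition (2) then identifies the constant value: $T_0(x_0)=x_0$, so $T_0\equiv x_0$, i.e.\ $T_0=O_{x_0}$.

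The only mildly subtle step is the first one --- recognizing that an isometry which is also idempotent must be the identity. That is the step where the isometry property (not mere continuity of the dilation) is essential, since without it idempotence alone would only give a retraction onto the fixed-point set of $T_1$. Once that observation is in hand, the rest is immediate, and no use of the continuity condition (4) or of pureness of $I$ is required.
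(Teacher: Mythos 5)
Your argument is correct and is essentially the paper's own proof: both use idempotence $T_1\circ T_1=T_1$ together with the scale-$1$ isometry property applied to the pair $(x,T_1(x))$ to force $d(x,T_1(x))=0$, and both identify $T_0$ as the constant map $O_{x_0}$ from the scale-$0$ property plus $T_0(x_0)=x_0$. No changes needed.
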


\begin{proof} For any $x\in X$, we have $T_1(T_1(x))=T_{1^2}(x)=T_1(x)$, so $d(x,T_1(x))=1\cd d(x,T_1(x))=d(T_1(x),T_1(T_1(x)))=0$, i.e. $x=T_1(x), \fa x\in X$. Also, we must have $T_0(x_0)=x_0$, but for any $x\in X$, we have $d(T_0(x),T_0(x_0))=0\cd d(x,x_0)=0$, so $T_0(x)=x_0,\fa x\in X$.\\ \end{proof}

\begin{prop} 
\label{prop: limit is identity} Let $(X,d)$ be a metric space. Let $\{T_{\al}\}_{\al\in I}$ be a dilation family on $(X,d,x_0)$. For any $x\in X$, we have $\lim_{\al\ri 1} T_{\al}(x)=x$.\end{prop}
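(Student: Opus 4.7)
The plan is to denote $L(x) := \lim_{\al \ri 1} T_{\al}(x)$ (which exists by property (4)) and show $L(x) = x$ by forcing $d(L(x),x) = 0$. A first instinct is to apply the dilation identity with $x_0$ as the second argument: $d(T_{\al}(x), x_0) = \al\, d(x,x_0)$, which in the limit yields only $d(L(x),x_0) = d(x,x_0)$. This is too weak, so I need an identity that compares $T_{\al}(x)$ to $x$ without going through the basepoint.

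The key step will be to exploit the semigroup property (3) by self-composing $T_{\al}$. Since $I$ is closed under multiplication, $\al^2 \in I$ and $T_{\al^2} = T_{\al}\circ T_{\al}$. Feeding the pair $x$ and $T_{\al}(x)$ into the dilation identity gives
\[
d\bigl(T_{\al^2}(x), T_{\al}(x)\bigr) \;=\; d\bigl(T_{\al}(T_{\al}(x)), T_{\al}(x)\bigr) \;=\; \al \, d\bigl(T_{\al}(x), x\bigr).
\]
Now I would let $\al \ri 1$ in $I$. Since $\al^2 \ri 1$ as well, the hypothesis (4), applied along the net $\al^2$ approaching $1$, gives $T_{\al^2}(x) \ri L(x)$; simultaneously $T_{\al}(x) \ri L(x)$. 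By continuity of $d$, the left-hand side tends to $d(L(x), L(x)) = 0$, while the right-hand side tends to $1\cdot d(L(x), x)$. Therefore $d(L(x), x) = 0$, i.e.\ $L(x) = x$, as required.

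The main obstacle, conceptually, is spotting that self-composition is the right trick: it produces a term whose metric distance to $T_{\al}(x)$ is $\al$ times the very quantity $d(T_{\al}(x), x)$ we wish to control, and as $\al \ri 1$ both endpoints on the left collapse to the same limit point $L(x)$. A minor technical point to handle is that the limit in (4) must be interpreted as a net/filter limit over $I$ approaching $1$, so that one can legitimately substitute the index $\al^2$ in place of $\al$; this is automatic whenever $1$ is a limit point of $I$, and the proposition is vacuous (or trivial via Proposition~\ref{prop: if 1 or 0 is in I}) otherwise.
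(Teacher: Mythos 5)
Your proposal is correct and is essentially identical to the paper's own proof: both use the self-composition $T_{\al^2}=T_{\al}\circ T_{\al}$ to obtain $d(T_{\al^2}(x),T_{\al}(x))=\al\, d(T_{\al}(x),x)$ and then pass to the limit $\al\ri 1$, where the left side collapses to $d(y,y)=0$ and the right side to $d(y,x)$. No substantive differences.
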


\begin{proof} Let $x$ be any point of $X$. Let $y$ denote $\lim_{\al \ri 1} T_{\al}(x)$. Then we also have that $y=\lim_{\al\ri 1} T_{\al^2}(x)$.\\

This means 

$$0=d(y,y)=d(\lim_{\al\ri 1} T_{\al^2}(x),\lim_{\al\ri 1} T_{\al}(x))$$

$$=\lim_{\al\ri 1} d(T_{\al^2}(x),T_{\al}(x))=\lim_{\al\ri 1} d(T_{\al}(T_{\al}(x)),T_{\al}(x))$$

$$=\lim_{\al\ri 1} \al d(T_{\al}(x),x)=\lim_{\al \ri 1} d(T_{\al}(x),x)$$

$$=d(\lim_{\al \ri 1} T_{\al}(x),\lim_{\al \ri 1} x)=d(y,x)$$

It follows that $y=x$.\\ \end{proof}

\begin{prop} 
\label{prop: x-function is cont} Let $(X,d)$ be a metric space. Let $\{T_{\al}\}_{\al\in I}$ be a dilation family on $(X,d,x_0)$. Then, for any $x\in X$, the function $T^x: I\ri (X,d)$ defined as $\al \mapsto T_{\al}(x)$ is continuous and uniformly continuous on any set $[a,b]\cap I$ where $a>0$.\end{prop}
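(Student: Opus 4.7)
The plan is to reduce both continuity and uniform continuity to Proposition~\ref{prop: limit is identity} by combining the semigroup law $T_\alpha\circ T_\beta = T_{\alpha\beta}$ with the dilation scaling. The central observation I would exploit is the following identity: whenever $\alpha,\beta\in I$ and the ratio $\beta/\alpha$ also belongs to $I$, the relation $T_\beta = T_\alpha\circ T_{\beta/\alpha}$ combined with the dilation property of $T_\alpha$ applied to the pair $(x,T_{\beta/\alpha}(x))$ yields
\[
d(T_\alpha(x),T_\beta(x)) = d\bigl(T_\alpha(x),\,T_\alpha(T_{\beta/\alpha}(x))\bigr) = \alpha \cdot d(x,T_{\beta/\alpha}(x)).
\]
As $\alpha\to\beta$, the ratio $\beta/\alpha$ tends to $1$, and Proposition~\ref{prop: limit is identity} applied at the fixed point $x$ drives the right-hand side to $0$. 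A symmetric identity $d(T_\alpha(x),T_\beta(x)) = \beta\cdot d(x,T_{\alpha/\beta}(x))$ handles the case $\alpha/\beta\in I$.

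For continuity at an arbitrary $\beta\in I$, I would first dispose of the edge case $\beta=0$ (when $0\in I$) using Proposition~\ref{prop: if 1 or 0 is in I} together with the estimate $d(T_\alpha(x),x_0)=\alpha\, d(x,x_0)\to 0$. For $\beta>0$ and a sequence $\alpha_n\to\beta$ in $I$, I would apply whichever of the two identities above is available based on whether $\beta/\alpha_n$ or $\alpha_n/\beta$ lies in $I$; in each case Proposition~\ref{prop: limit is identity} applied at $x$ completes the argument.

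For uniform continuity on $[a,b]\cap I$ with $a>0$, the purpose of the positive lower bound $a$ becomes transparent: it forces $|\beta/\alpha - 1| = |\beta-\alpha|/\alpha \le |\beta-\alpha|/a$, so $\beta/\alpha$ approaches $1$ uniformly in $\alpha,\beta\in[a,b]\cap I$ as $|\beta-\alpha|\to 0$. Combined with $\alpha\le b$, the first identity then gives a uniform bound $d(T_\alpha(x),T_\beta(x))\le b\cdot d(x,T_{\beta/\alpha}(x))$, whose right-hand side is controlled uniformly via Proposition~\ref{prop: limit is identity}.

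The step I expect to be the main obstacle is verifying that $\beta/\alpha$ (or $\alpha/\beta$) actually lies in $I$, so that the decomposition is meaningful. This is automatic when $I$ is a pure set in the sense of Definition~2.3, but for a general $I$ closed only under multiplication additional care may be needed; in that case I would try to approximate $(\alpha,\beta)$ by a pair $(\gamma\alpha,\gamma\beta)\in I^2$ for a suitable $\gamma\in I$ near $1$ and transfer the estimate via the scaling identity $d(T_{\gamma\alpha}(x),T_{\gamma\beta}(x)) = \gamma\, d(T_\alpha(x),T_\beta(x))$.
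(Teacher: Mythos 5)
Your proposal is correct and follows essentially the same route as the paper: both arguments rest on the decomposition $T_{\be}=T_{\al}\circ T_{\be/\al}$ (or its mirror image, depending on which ratio the pure set contains), the scaling of distance by the outer dilation, and Proposition~\ref{prop: limit is identity} to drive $d(x,T_{\be/\al}(x))$ to $0$ as the ratio tends to $1$, with the lower bound $a>0$ supplying the uniformity exactly as you describe and the $\al=0$ case handled separately via $d(x_0,T_{\be}(x))=\be\, d(x_0,x)$. Your closing concern about index sets that are merely closed under multiplication is reasonable but moot here, since the paper works under the standing assumption that $I$ is pure, which is precisely what makes one of the two ratios available in every case.
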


\begin{proof} Let any $\al\in I$ be given. We show continuity of $T^x$ at $\al$. There are three cases depending on the type of pure set $I$ is - these break down into three cases for $\al$, whether it's in $(0,1]$, $[1,\nf)$, or equal to $0$.\\

Suppose $\al$ is positive. Now, let any $\eps\in\ma R^+$ be given. Since $\lim_{\be \ri 1} T_{\be}(x)$ exists and equals $x$, there exists $0<\de<1$ such that whenever $\be \in B(1;\de)\cap I$ we have $d(T_{\be}(x),x)<\frac{\eps}{2\al}$. Now suppose $\be$ is any member of the set $B(\al; \frac{\al \de}{2})\cap I$. If $\al \in (0,1]$ and $\be<\al$, then we have:

$$d(T^x(\al),T^x(\be))=d(T_{\al}(x),T_{\be}(x))$$

$$=d(T_{\al}(x),T_{\al}(T_{\frac{\be}{\al}}(x)))$$

$$=\al d(x,T_{\frac{\be}{\al}}(x))$$

As $|\be-\al|<\frac{\al \de}{2}$, $|\frac{\be}{\al}-1|<\frac{\de}{2}<\de$, so we have $d(T^x(\al),T^x(\be))<\al \cd \frac{\eps}{2\al}<\eps$.\\

If $\al\in (0,1]$ and $\be>\al$, then we have:

$$d(T^x(\al),T^x(\be))=d(T_{\al}(x),T_{\be}(x))$$

$$=d(T_{\be}(T_{\frac{\al}{\be}}(x)),T_{\be}(x))$$

$$=\be d(T_{\frac{\al}{\be}}(x),x)$$

As $|\be-\al|<\frac{\al \de}{2}$, $|1-\frac{\al}{\be}|<\frac{\al}{\be}\frac{\de}{2}<\frac{\de}{2}<\de$, so we have $d(T^x(\al),T^x(\be))<\be \cd \frac{\eps}{2\al}$. As $0<\de<1$, $0<\be<\frac{3\al}{2}<2\al$, so this means $d(T^x(\al),T^x(\be))<\eps$.\\

If $\al\in [1,\nf)$ and $\be<\al$, then we have:

$$d(T^x(\al),T^x(\be))=d(T_{\al}(x),T_{\be}(x))$$

$$=d(T_{\be}(T_{\frac{\al}{\be}}(x)),T_{\be}(x))$$

$$=\be d(T_{\frac{\al}{\be}}(x),x)$$

As $|\be-\al|<\frac{\al \de}{2}$, $|1-\frac{\al}{\be}|<\frac{\al}{\be}\frac{\de}{2}$. As $\de<1$, $\be>\frac{\al}{2}$, so $|1-\frac{\al}{\be}|<\de$, giving us $d(T^x(\al),T^x(\be))<\be \cd \frac{\eps}{2\al}$. As $0<\de<1$, $0<\be<\frac{3\al}{2}<2\al$, so this means $d(T^x(\al),T^x(\be))<\eps$.\\

And if $\al\in [1,\nf)$ and $\be>\al$, then we have:

$$d(T^x(\al),T^x(\be))=d(T_{\al}(x),T_{\be}(x))$$

$$=d(T_{\al}(x),T_{\al}(T_{\frac{\be}{\al}}(x)))$$

$$=\al d(x,T_{\frac{\be}{\al}}(x))$$

As $|\be-\al|<\frac{\al \de}{2}$, $|\frac{\be}{\al}-1|<\frac{\de}{2}<\de$, so we have $d(T^x(\al),T^x(\be))<\al \cd \frac{\eps}{2\al}<\eps$.\\

So for any $\al>0$, we have $T^x(B(\al;\frac{\al\de}{2})\cap I)\subset B(T^x(\al);\eps)$. But this means that for any set $[a,b]\cap I$ with $a>0$ and any $\al\in [a,b]\cap I$, we have $T^x(B(\al;\frac{a\de}{2})\cap I)\subset T^x(B(\al;\frac{\al\de}{2})\cap I)\subset B(T^x(\al);\eps)$. As $\eps$ was an arbitrary positive real, we have that $T^x$ is uniformly continuous on any set $[a,b]\cap I$, and so is also continuous on $(0,\nf)\cap I$.\\

Now, suppose $0\in I$. To prove continuity of $T^x$ at $0$, again let any $\eps>0$ be given. If $x=x_0$, then $T^x$ is the constant function $x_0$, and so $T^x$ is trivially continuous at $0$. Assuming $x\neq x_0$, then given any $\be\in B(0;\frac{\eps}{d(x_0,x)})$, we have:

$$d(T^x(0),T^x(\be))=d(T_0(x),T_{\be}(x))$$

$$=d(x_0,T_{\be}(x))=d(T_{\be}(x_0),T_{\be}(x))$$

$$=\be d(x_0,x)<\frac{\eps}{d(x_0,x)}\cd d(x_0,x)=\eps$$

making $T^x$ continuous at $0$ as well.\\ \end{proof}

\begin{prop}
\label{prop: extend dilation family} Let $(X,d)$ be a Cauchy metric space. Let $\{T_{\al}\}_{\al\in I}$ be a dilation family on $(X,d,x_0)$. Then, setting $J=\overline{I}$ (where the closure is taken in $\ma R_{\geq 0}$), $J$ is also a pure set and there exists a dilation family about $x_0$, $\{T_{\al}\}_{\al\in J}$, that extends $\{T_{\al}\}_{\al\in I}$.\end{prop}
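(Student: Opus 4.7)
The plan is, for each $\al\in J\setminus I$, to define $T_\al(x)$ as the limit of $T_{\al_n}(x)$ along any sequence $\al_n\to \al$ in $I$, extend the family by these limits, and verify the four dilation-family axioms. Before doing so I would check that $J=\overline{I}$ really is a pure set: multiplication on $\ma R_{\geq 0}$ is jointly continuous, so the closure of a set closed under multiplication is closed under multiplication, and $1\in I\subset J$. For each of the three cases in the purity definition, the appropriate ratio operation is continuous on the relevant domain: given $\al<\be$ in $J$, take sequences $\al_n\to \al$ and $\be_n\to \be$ in $I$ (eventually satisfying $\al_n<\be_n$ and lying in $[1,\nf)$ or $[0,1]$ as needed), so that $\be_n/\al_n$ (resp.\ $\al_n/\be_n$) lies in $I$ and converges to the desired ratio, which thus belongs to $J$.

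For the construction itself: fix $\al\in J$ with $\al>0$ and choose any $\al_n\in I$ with $\al_n\to \al$. By Proposition~\ref{prop: x-function is cont}, $T^x$ is uniformly continuous on $[\al/2,2\al]\cap I$, so $\{T_{\al_n}(x)\}$ is Cauchy, and by Cauchy-completeness of $X$ it converges to a point, which I declare to be $T_\al(x)$. Independence of the choice of approximating sequence is obtained by interleaving two such sequences into a single one converging to $\al$. If $0\in J\setminus I$, the estimate $d(T_{\al_n}(x),x_0)=d(T_{\al_n}(x),T_{\al_n}(x_0))=\al_n d(x,x_0)\to 0$ shows $T_{\al_n}(x)\to x_0$ for any $\al_n\to 0$ in $I$, so I set $T_0(x):=x_0$, consistent with the limit definition.

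Then I would verify the four axioms. The dilation identity $d(T_\al(x),T_\al(y))=\al d(x,y)$ follows by passing to the limit in $d(T_{\al_n}(x),T_{\al_n}(y))=\al_n d(x,y)$; the fixed-point condition follows from $T_{\al_n}(x_0)=x_0$; and the continuity condition at $1$ follows because the extended map $\al\mapsto T_\al(x)$ is, by construction, the continuous extension of $T^x\colon I\to X$ to its closure $J$ afforded by the uniform-continuity extension theorem, whose value at $1$ is $T_1(x)=x$ by Propositions~\ref{prop: if 1 or 0 is in I} and~\ref{prop: limit is identity}. The main work — and the main obstacle — is the composition law $T_\al\circ T_\be=T_{\al\be}$: choosing $\al_n\to\al$ and $\be_n\to\be$ in $I$, we have $T_{\al_n\be_n}=T_{\al_n}\circ T_{\be_n}$ and $T_{\al_n\be_n}(x)\to T_{\al\be}(x)$ by definition, while a triangle inequality splitting $T_{\al_n}(T_{\be_n}(x))$ through $T_{\al_n}(T_\be(x))$ controls one piece by $\al_n\cd d(T_{\be_n}(x),T_\be(x))\to 0$ (using that $T_{\al_n}$ is $\al_n$-Lipschitz) and the other by the limit definition of $T_\al(T_\be(x))$. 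Juggling two approximating sequences simultaneously, while using both the metric limit and the Lipschitz bound for the as-yet-undefined maps, is the main technical step of the proof.
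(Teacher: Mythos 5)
Your proposal is correct and follows essentially the same route as the paper: both define $T_\al(x)$ as the limit of $T_\be(x)$ along $\be\to\al$ in $I$ (using the uniform continuity from Proposition~\ref{prop: x-function is cont} together with Cauchy-completeness), establish purity of $\overline{I}$ by taking limits of ratios of approximating sequences, and prove the composition law by passing the limit through a dilation via its Lipschitz property. Your explicit triangle-inequality splitting for $T_\al\circ T_\be=T_{\al\be}$ is just a sequence-based rewriting of the paper's interchange of limits using the continuity of $T_\al$.
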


\begin{proof} First, I show that $J=\overline{I}$ is pure. As $I$ contains $1$, $J$ does too.\\

If $I\subset [0,1]$, then $J\subset [0,1]$ as well. Given any $c<d\in J$, we can choose sequences of elements, $\{\al_i\}_{i=1}^{\nf}\subset I$ converging to $c$ and $\{\be_i\}_{i=1}^{\nf}\subset I$ converging to $d$, such that $\al_i<\be_i, \fa i\in \ma Z^+$. Then $\frac{\al_i}{\be_i}\in I, \fa i\in\ma Z^+$, and $\lim_{i\ri \nf} \frac{\al_i}{\be_i}=\frac{c}{d}$, so $\frac{c}{d}\in J$, making $J$ pure.\\

If $I\subset [1,\nf)$, then $J\subset [1,\nf)$ as well. Given any $c<d\in J$, we can choose sequences of elements, $\{\al_i\}_{i=1}^{\nf}\subset I$ converging to $c$ and $\{\be_i\}_{i=1}^{\nf}\subset I$ converging to $d$, such that $\al_i<\be_i, \fa i\in \ma Z^+$. Then $\frac{\be_i}{\al_i}\in I, \fa i\in\ma Z^+$, and $\lim_{i\ri \nf} \frac{\be_i}{\al_i}=\frac{d}{c}$, so $\frac{d}{c}\in J$, making $J$ pure.\\

If $I$ intersects both $[0,1)$ and $[1,\nf)$, then $I$ closed under nonzero division, so $J$ is as well, making $J$ a pure set as well.\\

Now, using Proposition \ref{prop: x-function is cont} (in particular, the fact that $T^x$ is uniformly continuous on $[a,b]$ for $a>0$), for any $\al\in J$, define a function $T_{\al}\in X^X$ such that $T_{\al}(x)=\lim_{\substack{\be\ri \al,\\ \be\in I}} T^x(\be)$. The extension, $T^x:J\ri X$, defined as $T^x(\al)=T_{\al}(x)$ is continuous.\\

Now, for any $\al\in J$, we have 

$$T_{\al}(x_0)=\lim_{\substack{\be\ri \al,\\ \be\in I}} T^x(\be)$$

$$=\lim_{\substack{\be\ri \al,\\ \be\in I}} T_{\be}(x_0)=\lim_{\substack{\be\ri \al,\\ \be\in I}} x_0=x_0$$

Also $T_1(x_0)=x_0$.\\

Next, for any $\al\in \overline{I}$ and any $x,y\in X$, we have 

$$d(T_{\al}(x),T_{\al}(y))=d(\lim_{\substack{\be\ri \al,\\ \be\in I}} T^x(\be),\lim_{\substack{\be\ri \al,\\ \be\in I}} T^y(\be))$$

$$=\lim_{\substack{\be\ri \al,\\ \be\in I}} d( T_{\be}(x),T_{\be}(y))$$

$$=\lim_{\substack{\be\ri \al,\\ \be\in I}} \be d(x,y)$$

$$=\al d(x,y)$$

so $T_{\al}$ is a dilation of scale $\al, \fa \al\in J$.\\

Given any $\al,\be\in \overline{I}$ and any $x\in X$ (appealing to the continuity of $T_{\al}$), we have:

$$(T_{\al}\circ T_{\be})(x)=T_{\al}(T_{\be}(x))=T_{\al}(\lim_{\substack{\ga\ri \be,\\ \ga\in I}} T^x(\ga))$$

$$=\lim_{\substack{\ga\ri \be,\\ \ga\in I}} T_{\al}(T^x(\ga))=\lim_{\substack{\ga\ri \be,\\ \ga\in I}} T_{\al}(T_{\ga}(x))$$

$$=\lim_{\substack{\ga\ri \be,\\ \ga\in I}} \lim_{\substack{\de\ri \al,\\ \de\in I}} T^{T_{\ga}(x)}(\de)=\lim_{\substack{\ga\ri \be,\\ \ga\in I}} \lim_{\substack{\de\ri \al,\\ \de\in I}} T_{\de}(T_{\ga}(x))$$

$$=\lim_{\substack{\ga\ri \be,\\ \ga\in I}} \lim_{\substack{\de\ri \al,\\ \de\in I}} T_{\de\ga}(x)=\lim_{\substack{\de\ga\ri \al\be,\\ \de,\ga\in I}} T^x(\de\ga)$$

$$=T_{\al\be}(x)$$

where the last two equalities hold because $T^x: J \ri X$ is continuous.\\

As $x$ was an arbitrary point of $X$, this means $T_{\al}\circ T_{\be}=T_{\al\be}, \fa \al,\be\in J$.\\

Lastly, as $T^x:J\ri X$ is continuous $\fa x\in X$, we have for any $x\in X$ that $\lim_{\al\ri 1} T_{\al}(x)$ exists. So $\{T_{\al}\}_{\al\in J}$ is a dilation family about $x_0$ extending $\{T_{\al}\}_{\al\in I}$, as desired.\\ \end{proof}

\begin{prop}
\label{prop: adding 0} Let $(X,d)$ be a metric space. Let $\{T_{\al}\}_{\al\in I}$ be a dilation family on $(X,d,x_0)$. Suppose $I\cap [0,1)\neq \emptyset$. Then $J=I\cup \{0\}$ is pure and, setting $T_0=O_{x_0}$, the collection of functions $\{T_{\al}\}_{\al\in J}$ is also a dilation family on $(X,d,x_0)$.\end{prop}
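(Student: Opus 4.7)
The plan is to split the proof into two largely independent pieces: first showing $J = I \cup \{0\}$ satisfies the purity conditions, and second verifying that the four defining properties of a dilation family survive the addition of $T_0 = O_{x_0}$.

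For purity, since $I \cap [0,1) \neq \emptyset$, $I$ cannot lie in $[1,\infty)$, so the pure set $I$ falls into one of the remaining two cases of Definition 1.3. If $I \subset [0,1]$, then $J \subset [0,1]$, $J$ contains $1$, and for any $\alpha < \beta$ in $J$ the quotient $\alpha/\beta$ lies in $J$ (if $\alpha = 0$ this is $0 \in J$; otherwise both are in $I$ and the property is inherited). Multiplicative closure follows since $0 \cdot \gamma = 0 \in J$ for any $\gamma \in J$. If instead $I$ meets both $[0,1)$ and $[1,\infty)$, then $I$ is closed under nonzero division, and so is $J$, since the only new element $0$ is never used as a divisor; multiplicative closure is again immediate.

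For the dilation family axioms, I would check each of the four conditions from Definition 1.2 in turn. Condition (1) holds because $d(T_0(x_1), T_0(x_2)) = d(x_0, x_0) = 0 = 0 \cdot d(x_1, x_2)$, so $T_0$ is a dilation of scale $0$. Condition (2) is immediate from the definition $T_0 = O_{x_0}$. For condition (3), the only new cases involve $0$: if $\beta = 0$, then $(T_\alpha \circ T_0)(x) = T_\alpha(x_0) = x_0 = T_{\alpha \cdot 0}(x)$ using property (2) for $T_\alpha$; if $\alpha = 0$, then $(T_0 \circ T_\beta)(x) = x_0 = T_{0 \cdot \beta}(x)$ by definition. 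The remaining case $\alpha, \beta \in I$ is assumed.

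The only condition that needs a moment of thought is (4), namely that $\lim_{\alpha \to 1} T_\alpha(x)$ exists when the limit is taken through the enlarged index $J$. But since $0$ is bounded away from $1$, any neighborhood of $1$ small enough to avoid $0$ intersects $J$ only in $I$, so the limit through $J$ exists and agrees with the limit through $I$, which is given. I do not expect a real obstacle anywhere here — the whole argument is a case split — but the purity verification is the spot most prone to a slip, since one must be careful about which clause of Definition 1.3 applies once $0$ is adjoined.
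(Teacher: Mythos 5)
Your proposal is correct and follows essentially the same route as the paper: the same case split on which clause of the purity definition applies to $I$, followed by the routine verification that adjoining $T_0=O_{x_0}$ preserves the dilation-family axioms. You are in fact slightly more explicit than the paper, which leaves the scale-$0$ dilation property and the $\lim_{\al\ri 1}$ condition implicit (the latter being unaffected since $0$ is isolated from $1$ in $J$, exactly as you observe).
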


\begin{proof} Supposing $I\cap [0,1)\neq \emptyset$, let $J=I\cup\{0\}$.\\

If $I\subset [0,1]$, then $J\subset [0,1]$ as well. Given any $\al<\be\in K$, either $\al,\be\in I$, in which case (as $I$ as pure), $\frac{\al}{\be}\in J$. If $0<\al$, then $\frac{0}{\al}=0\in K$. So $J$ is pure.\\

If $I\not\subset [0,1]$, then $I$ is closed under nonzero division. $J$ will then also be closed under nonzero division. And given any $\al\neq 0\in K$, we have $\frac{0}{\al}=0\in J$, so it follows that $J$ is closed under nonzero division as well, i.e. $J$ is pure too.\\

Finally, we set $T_0$ equal the constant map $O_{x_0}$. By Proposition \ref{prop: if 1 or 0 is in I}, this definition produces no inconsistency. Given any $\al\in K$, we have $T_{\al}\circ T_0=T_{\al}\circ O_{x_0}=O_{x_0}=O_{x_0}\circ T_{\al}=T_0\circ T_{\al}$. So $\{T_{\al}\}_{\al\in K}$ is also a dilation family on $(X,d,x_0)$, completing the proof.\\ \end{proof}

We now establish one of the central claims made earlier about dilation families - that they are a 'a strengthening of classical contractibility conditions' (specifically, dilation families with index sets that cover at least $[0,1]$):

\begin{prop}
\label{prop: continuous action} Let $(X,d)$ be a metric space and $x_0$ be a point in $X$. If $\{T_{\al}\}_{\al\in I}$ is a dilation family on $(X,d,x_0)$, then the function $F:I\ti X\ri X$ defined as $F(\al,x)=T_{\al}(x), \fa (\al,x)\in I\ti X$ is continuous.\end{prop}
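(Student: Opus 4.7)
The plan is to reduce joint continuity of $F$ at an arbitrary point $(\al_0, y)\in I\ti X$ to two ingredients already established: each $T_{\al}$ is a Lipschitz map with constant $\al$, and the single-point orbit map $T^y:I\ri X$ defined by $\be \mapsto T_{\be}(y)$ is continuous at $\al_0$ by Proposition \ref{prop: x-function is cont}. These fit together via the triangle-inequality split
\[
d(T_{\al}(x),T_{\al_0}(y)) \leq d(T_{\al}(x),T_{\al}(y)) + d(T_{\al}(y),T_{\al_0}(y)) = \al\, d(x,y) + d(T^y(\al),T^y(\al_0)),
\]
so it suffices to bound each summand by $\eps/2$.

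Given $\eps>0$, I would first apply Proposition \ref{prop: x-function is cont} to produce $\de_1>0$ with $d(T^y(\al),T^y(\al_0))<\eps/2$ whenever $\al\in I$ and $|\al-\al_0|<\de_1$. Restricting further to $|\al-\al_0|<1$ bounds $\al$ above by $\al_0+1$, so demanding $d(x,y)<\eps/(2(\al_0+1))$ forces $\al\, d(x,y)<\eps/2$. Setting $\de = \min(\de_1,\, 1,\, \eps/(2(\al_0+1)))$, any $(\al,x)\in I\ti X$ with $|\al-\al_0|<\de$ and $d(x,y)<\de$ then satisfies $d(F(\al,x),F(\al_0,y))<\eps$, which is exactly joint continuity of $F$ at $(\al_0,y)$ in the product topology.

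No part of this is genuinely delicate. The boundary case $\al_0=0$ (when $0\in I$) is handled uniformly: continuity of $T^y$ at $0$ was verified directly in Proposition \ref{prop: x-function is cont}, and the $\al\, d(x,y)$ summand only becomes easier to estimate when $\al$ is near zero. Consequently, the argument works without modification across all three kinds of pure index sets covered by the definition.
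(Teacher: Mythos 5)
Your proof is correct and follows essentially the same route as the paper's: both split $d(T_{\al}(x),T_{\al_0}(y))$ by the triangle inequality into an orbit-map term controlled by Proposition \ref{prop: x-function is cont} and a term of the form $(\text{scale})\cdot d(x,y)$ bounded via a crude upper bound on the nearby scale. The only cosmetic differences are the choice of intermediate point ($T_{\al}(y)$ rather than the paper's $T_{\be}(x)$) and your cleaner, uniform treatment of the case $\al_0=0$.
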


\begin{proof} Let $(\al,x)$ be any point of $I\ti X$ and let $\eps$ be any positive real number. By Proposition \ref{prop: x-function is cont}, we can choose some $\de\in (0,\al)$ such that whenever $\be\in B(\al;\de)$, $d(T^x(\al),T^x(\be))<\frac{\eps}{2}$. Now, if $\al\geq 1$, for any $(\be,y)\in B(\al;\de)\ti B_d(x;\frac{\eps}{4\al})$, we have:

$$d(F(\al,x),F(\be,y))=d(T_{\al}(x),T_{\be}(y))$$

$$=d(T_{\al}(x),T_{\be}(x))+d(T_{\be}(x),T_{\be}(y))$$

$$=d(T^x(\al),T^x(\be))+\be d(x,y)$$

$$<\frac{\eps}{2}+\be\cd \frac{\eps}{4\al}=\frac{\eps}{2}+\frac{\eps}{2}=\eps$$

where the second-last equality follows from the fact that (as $\be\in B(\al;\de)$ and $0<\de<\al$) $\be<2\al$.\\

If $\al<1$, then for any $(\be,y)\in B(\al;\de)\ti B_d(x;\frac{\eps}{4})$, we have:

$$d(F(\al,x),F(\be,y))=d(T_{\al}(x),T_{\be}(y))$$

$$=d(T_{\al}(x),T_{\be}(x))+d(T_{\be}(x),T_{\be}(y))$$

$$=d(T^x(\al),T^x(\be))+\be d(x,y)$$

$$<\frac{\eps}{2}+\be\cd \frac{\eps}{4}<\frac{\eps}{2}+2\al\cd \frac{\eps}{4}$$

$$<\frac{\eps}{2}+\frac{\eps}{2}=\eps$$

As $(\al,x)$ was an arbitrary point of $I\ti X$ and $\eps$ was an arbitrary positive real, $F$ is continuous everywhere, as desired.\\ \end{proof}

\begin{co}
\label{co: continuous map into function space} Let $(X,d)$ be a metric space and $x_0$ be a point in $X$. If $\{T_{\al}\}_{\al\in I}$ is a dilation family on $(X,d,x_0)$, then the function $F:I\ri C(X)$ defined as $F(\al)(x)=T_{\al}(x), \fa (\al,x)\in I\ti X$ is continuous ($C(X)$ has the compact-open topology). \end{co}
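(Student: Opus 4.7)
The plan is to recognize this as a standard consequence of the joint continuity established in Proposition \ref{prop: continuous action}, via the adjunction between maps $I\ti X\ri X$ and maps $I\ri C(X)$. Recall that the compact-open topology on $C(X)$ is generated by the subbase $\{V(K,U)\}$, where $K\subset X$ is compact, $U\subset X$ is open, and $V(K,U)=\{f\in C(X):f(K)\subset U\}$. It therefore suffices to show that $F^{-1}(V(K,U))$ is open in $I$ for every such subbasic neighborhood. (Each $T_{\al}$ is a dilation, hence continuous, so $F$ does land in $C(X)$.)

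To that end, I would fix $\al_0\in F^{-1}(V(K,U))$, so that $T_{\al_0}(K)\subset U$. For each $x\in K$, the point $(\al_0,x)$ lies in the preimage of $U$ under the continuous two-variable map $G:I\ti X\ri X$, $G(\al,x)=T_{\al}(x)$, supplied by Proposition \ref{prop: continuous action}. Hence there exist $\de_x>0$ and an open neighborhood $V_x\subset X$ of $x$ such that $T_{\be}(y)\in U$ whenever $\be\in B(\al_0;\de_x)\cap I$ and $y\in V_x$.

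Compactness then finishes the argument: extract a finite subcover $V_{x_1},\ldots,V_{x_n}$ of $K$ and set $\de=\min_{1\leq i\leq n}\de_{x_i}$. For any $\be\in B(\al_0;\de)\cap I$ and any $y\in K$, some $V_{x_i}$ contains $y$ while $\be\in B(\al_0;\de_{x_i})\cap I$, and therefore $T_{\be}(y)\in U$. Consequently $F(B(\al_0;\de)\cap I)\subset V(K,U)$, witnessing the openness of $F^{-1}(V(K,U))$. I do not foresee any substantive obstacle here: this is essentially the classical observation that a jointly continuous two-variable map induces a continuous map into the compact-open function space, with all the genuine analytic content already absorbed into Proposition \ref{prop: continuous action}.
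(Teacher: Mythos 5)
Your proposal is correct and takes essentially the same route as the paper: the paper's proof is a one-line appeal to Proposition \ref{prop: continuous action} together with the standard fact that a jointly continuous map $I\times X\to X$ induces a continuous map into $C(X)$ with the compact-open topology, and your argument is simply the full proof of that standard fact (subbasic sets $V(K,U)$, joint continuity, and a finite subcover of $K$). No gaps.
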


\begin{proof} This follows directly from Proposition \ref{prop: continuous action} and the fact that the induced function of a continuous function produces a continuous function into the function space (provided the function space is equipped with the compact-open topology).\\ \end{proof}

\begin{defi} Let $X$ be a topological space. The symbol $C_{x_0}(X)$ denotes the set $\{f\in C(X) | f(x_0)=x_0\}$. \\ \end{defi}

We mostly work with the compact-open topology (on $C_{x_0}(X)$, we usually apply the subspace topology induced by the compact-open topology).\\

The rest of this paper will be devoted to finding sufficient conditions for the existence of dilation families. The way we will do this is by first demonstrating that all metrizable cone spaces can be given a metric under which they possess a very special kind of dilation family and then describing conditions under which topological or metric spaces may be converted into such metrizable cone spaces. These 'special kind' of dilation families are the following:

\begin{defi} Let $(X,d)$ be a metric space and $x_0$ be a point in $X$. If $\{T_{\al}\}_{\al\in I}$ is a dilation family on $(X,d,x_0)$, and we have for any $\al\leq \be\leq \ga\in I$, $d(T_{\al}(x),T_{\ga}(x))=d(T_{\al}(x),T_{\be}(x))+d(T_{\be}(x),T_{\ga}(x)), \fa x\in X$, then we say that $\{T_{\al}\}_{\al\in I}$ is a linear dilation family on $(X,d,x_0)$.\\ \end{defi}

\begin{defi} Let $C$ be a topological space, $I$ be a subset of $\ma R$ containing $0$, and $p$ be any point. $\con_p(I,C)$ denotes the cone formed from $I\ti C$ by identifying $\{0\}\ti C$ with $p$.\\ \end{defi}

The following result provides us with a general method of producing metrics for metrizable cones under which they have linear dilation families:\\

\begin{prop}
\label{prop: metrizable cones} Let $(C,d)$ be a metric space with diameter at most $2$. Letting $X$ denote $\con_{x_0}([0,1],C)$, the function $D:X\ti X\ri \ma R_{\geq 0}$ defined as $D(x_0,x_0)=0, D((a,c_1),x_0)=D(x_0,(a,c_1))=a$ and $D((a,c_1),(b,c_2))=|a-b|+\min(a,b)d(c_1,c_2), \fa a,b\in (0,1],\fa c_1,c_2\in C$ is a metric on $X$ preserving the topology of $X$. Also, the family of functions $\{F(\al)\}_{\al\in [0,1]}$ defined as $F(\al)(x_0)=x_0, F(\al)(a,x)=(a\al,x), \fa \al\in (0,1]$, and $F(0)=O_{x_0}$ is a linear dilation family on $(X,D,x_0)$. Furthermore, $\tr{Bd}(B_D(x_0;1))=\{1\}\ti C$.\end{prop}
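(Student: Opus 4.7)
The proof splits into four parts: (a) $D$ is a metric on $X$, (b) $D$ induces the cone topology on $X$, (c) $\{F(\al)\}_{\al \in [0,1]}$ is a linear dilation family about $x_0$, and (d) $\tr{Bd}(B_D(x_0; 1)) = \{1\} \ti C$. Parts (b), (c), and (d) are bookkeeping; the real obstacle is the triangle inequality in (a), which is the only step that truly uses $\tr{diam}(C) \leq 2$.

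For (a), symmetry, nonnegativity, and $D(x, y) = 0 \iff x = y$ drop out of the definition. After dispatching the easy cases where some of the three points equal $x_0$, the triangle inequality reduces to showing for three non-cone points $(a, c_1), (b, c_2), (e, c_3)$ that
$$|a - e| + \min(a, e)\, d(c_1, c_3) \leq |a - b| + \min(a, b)\, d(c_1, c_2) + |b - e| + \min(b, e)\, d(c_2, c_3).$$
I would split on the position of $b$. When $b \geq \min(a, e)$, both $\min(a, b)$ and $\min(b, e)$ dominate $\min(a, e)$, and combining the triangle inequalities in $\ma R$ and $(C, d)$ finishes the case. The subtle case is $b < \min(a, e)$: now $\min(a, b) = \min(b, e) = b$, shrinking the angular contribution on the right. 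However, the identity $|a - b| + |b - e| - |a - e| = 2(\min(a, e) - b)$ provides a positive radial buffer, and $d(c_1, c_3) \leq 2$ ensures $(\min(a, e) - b)\, d(c_1, c_3) \leq 2(\min(a, e) - b)$, exactly absorbing the angular shortfall. This is the only spot where the diameter bound is essential.

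For (b), I would compare neighborhood bases. At $(a, c)$ with $a > 0$ the formula for $D$ sandwiches small $D$-balls between product-topology boxes $(a - \de, a + \de) \ti B_d(c; \eta)$. At the cone point, $B_D(x_0; \eps) = \{x_0\} \cup \{(b, c') : b < \eps\}$ is exactly the quotient image of $[0, \eps) \ti C$; conversely any cone-topology neighborhood of $x_0$ descends from an open set of $[0,1] \ti C$ containing $\{0\} \ti C$, and matching tube widths gives it a metric sub-ball. So both topologies have mutually refining bases.

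For (c), $D(F(\al)(a, c_1), F(\al)(b, c_2)) = \al\, D((a, c_1), (b, c_2))$ by factoring $\al$ out of the formula for $D$, with the $x_0$ cases analogous; $F(\al)(x_0) = x_0$ by construction; $F(\al) \circ F(\be) = F(\al\be)$ from $(a, c) \mapsto (a\be, c) \mapsto (a\al\be, c)$; and $\lim_{\al \to 1} F(\al)(a, c) = (a, c)$ since $D(F(\al)(a, c), (a, c)) = a|1 - \al|$. Linearity follows because for $\al \leq \be \leq \ga$ and $x = (a, c)$ the three images lie on a common ray and their pairwise $D$-distances reduce to absolute differences in the first coordinate, which telescope (the case $x = x_0$ is trivial). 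For (d), $B_D(x_0; 1) = \{x_0\} \cup \{(a, c) : a < 1\}$ is open, each $(1, c)$ is the $D$-limit of $(1 - 1/n, c) \in B_D(x_0; 1)$, and any $(a, c)$ with $a > 1$ satisfies $D(x_0, (a, c)) = a > 1$ and so lies outside the closure; hence $\tr{Bd}(B_D(x_0; 1)) = \{1\} \ti C$.
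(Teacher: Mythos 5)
Your proposal is correct and follows essentially the same route as the paper's own proof: the identical four-part decomposition, the identical case split on the position of $b$ for the triangle inequality (your identity $|a-b|+|b-e|-|a-e|=2(\min(a,e)-b)$ together with $d(c_1,c_3)\leq 2$ is just a repackaging of the paper's observation that $(c-b)(2-d(c_1,c_3))\geq 0$), and the same neighborhood-base comparison for the topology. The one delicate point is shared rather than divergent: both you and the paper assert that every cone-topology neighborhood of $x_0$ contains a tube $\{x_0\}\cup\bigl((0,a)\ti C\bigr)$, which under the quotient topology implicitly leans on a tube-lemma-type fact about $C$; otherwise your sketch, once the routine verifications are written out, reproduces the paper's argument.
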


\begin{proof} 

\tb{\underline{Part 1:}}\\

Clearly $D$ is a well-defined nonnegative symmetric function.\\

We have $D(x_0,x_0)=0$ and for any $(a,c)\in X-\{x_0\}$, $D((a,c),(a,c))=|a-a|+\min(a,a)d(c,c)=0+0=0$.\\

Conversely, if $D(z_1,z_2)=0$, there are two cases, the first of which is where $z_1,z_2\neq x_0$ in which case $z_1=(a,c_1), z_2=(b,c_2)$ for some $a,b\in (0,1]$, and some $c_1,c_2\in C$. This means $|a-b|+\min(a,b)d(c_1,c_2)=0\Ri |a-b|=0, \min(a,b)d(c_1,c_2)=0$. As $a,b>0$, $\min(a,b)>0$, so we get $d(c_1,c_2)=0$ as well, i.e. $a=b$ and $c_1=c_2$, giving us that $z_1=(a,c_1)=(b,c_2)=z_2$. The other case (WLOG) is where $z_1=x_0$. If $z_2\neq x_0$, then $z_2=(a,c)$ for some $a\in (0,1]$ and some $c\in C$, so $0=D(z_1,z_2)=D(x_0,(a,c))=a$, a contradiction, so we must have $z_2=x_0=z_1$. By the symmetry of $D$, the same occurs if $z_2=x_0$. So $D$ satisfies the identity of indiscernibles.\\

Now we must show that $D$ satisfies the triangle inequality, i.e. $D(x,z)\leq D(x,y)+D(y,z), \fa x,y,z\in X$. The first case is where $x,y,$ and $z$ are all not equal to $x_0$. We can write $x=(a,c_1), y=(b,c_2),$ and $z=(c,c_3)$ for some $a,b,c\in (0,1]$ and some $c_1,c_2,c_3\in C$. We assume WLOG that $a\geq c$. There are two subcases - the first is where $b\geq c$ and the second is where $b\leq c$. If $b\geq c$, then:

$$D(x,z)=|a-c|+\min(a,c)d(c_1,c_3)$$

$$=|a-c|+c d(c_1,c_3)$$

$$\leq |a-b|+|b-c|+c d(c_1,c_3)$$

$$\leq |a-b|+|b-c|+c d(c_1,c_2)+c d(c_2,c_3)$$

$$\leq |a-b|+|b-c|+\min(a,b) d(c_1,c_2)+\min(b,c) d(c_2,c_3)$$

$$=D(x,y)+D(y,z)$$

If $b\leq c$, then

$$D(x,z)=|a-c|+\min(a,c)d(c_1,c_3)$$

$$=a-c+c d(c_1,c_3)$$

Now 

$$[a+c-2b+b d(c_1,c_3)]-[a-c+c d(c_1,c_3)]$$

$$=2c-2b+(b-c)d(c_1,c_3)$$

$$=(c-b)(2-d(c_1,c_3))$$

$$\geq 0$$

where the final inequality follows as $d$ is bounded above by $2$. So:

$$D(x,z)=a-c+c d(c_1,c_3)$$

$$\leq a+c-2b+b d(c_1,c_3)$$

$$=(a-b)+(c-b)+b d(c_1,c_3)$$

$$=|a-b|+|b-c|+b d(c_1,c_3)$$

$$\leq |a-b|+|b-c|+b d(c_1,c_2)+b d(c_2,c_3)$$

$$= |a-b|+|b-c|+\min(a,b) d(c_1,c_2)+\min(b,c) d(c_2,c_3)$$

$$= D(x,y)+D(y,z)$$

The next case is where $z=x_0$, and $x,y\neq x_0$. We have $x=(a,c_1), y=(b,c_2)$ for some $a,b\in (0,1]$, and some $c_1,c_2\in C$. There are again two subcases here - where $a\geq b$ and $a\leq b$.\\

If $\al\geq \be$, then:

$$D(x,z)=D(x,x_0)=a$$

$$=b+(a-b)$$

$$\leq b+|a-b|+\min(a,b)d(c_1,c_2)$$

$$=D(x,y)+D(y,x_0)=D(x,y)+D(y,z)$$

If $a\leq b$, then:

$$D(x,z)=D(x,x_0)=a\leq b$$

$$\leq b +|a-b|+\min(a,b)d(c_1,c_2)$$

$$=D(x,y)+D(y,x_0)=D(x,y)+D(y,z)$$

And the next case (and the last nontrivial case) is where $y=x_0$ but $x,z\neq x_0$. We can write $x=(a,c_1), z=(c,c_3)$ for some $a,c\in (0,1]$, and some $c_1,c_3\in C$. Here (again, assuming WLOG that $a\geq c$), we have:

$$D(x,z)=|a-c|+\min(a,c) d(c_1,c_3)$$

$$=a-c + c d(c_1,c_3)$$

$$\leq a-c+2c=a+c$$

$$=D(x,x_0)+D(x_0,z)=D(x,y)+D(y,z)$$

These cover all nontrivial cases (by appropriately also applying the symmetry property of $D$). The remaining cases are just those where two or more of $x,y,$ and $z$ equal $x_0$, but these are all trivial.\\

If $y=x_0=z$, then $D(y,z)=0$, so $D(x,z)\leq D(x,z)+0=D(x,z)+D(y,z)=D(x,y)+D(y,z)$. Similarly, if $x=z_0=z$, then $D(x,z)=0$ which, by nonnegativity of $D$, is automatically $\leq D(x,y)+D(y,z)$. So $D$ satisfies the triangle inequality, making it a metric extending $d$.\\

\tb{\underline{Part 2:}}\\

Next, we show that the topology induced by $D$ is that of $\con_{x_0}([0,1],C)$.\\

First, let $O$ be any open set of $\con_{x_0}([0,1],C)$ and $x$ be any member of $O$. If $x$ equals some $(a,c)\in (0,1]\ti C$, then we may choose some $\eps>0$ such that $x=(a,c_1)\in B(a;\eps)\ti B_d(c_1;\eps)\subset O$ (where $B(a;\eps)$ denotes the open ball of radius $\eps$ about $a$ in $(0,1]$). We may assume WLOG that $\eps<\frac{a}{2}$. Now let $y$ be any member of $B_D\left(x;\frac{a\eps}{2}\right)$. If $y=x_0$, then $a=D(x_0,(a,c_1))=D(y,x)<\frac{a\eps}{2}<\frac{a^2}{2}<a$, a contradiction, so $y$ equals some $(b,c_2)\in (0,1]\ti C$. Since $y\in B_D\left(x;\frac{a\eps}{2}\right)$, we have:

$$|a-b|\leq |a-b|+\min(a,b)d(c_1,c_2)$$

$$=D(x,y)<\frac{a\eps}{2}<\eps$$

so $b\in B(a;\eps)$. Also, as $\eps<\frac{a}{2}$, $b\in B\left(a;\frac{a}{2}\right)$, which means $b>\frac{a}{2}$. This means:

$$\frac{a}{2}d(c_1,c_2)<\min(a,b)d(c_1,c_2)$$

$$\leq |a-b|+\min(a,b)d(c_1,c_2)$$

$$=D(x,y)<\frac{a\eps}{2}$$

Cancelling $\frac{a}{2}$ from the first and last expressions, we get that $d(c_1,c_2)<\eps$, i.e. $c_2\in B_d(c_1;\eps)$. So $y=(b,c_2)\in B(a;\eps)\ti B_d(c_1;\eps)\subset O$. As $y$ was an arbitrary point of $B_D\left(x;\frac{a\eps}{2}\right)$, we have $x\in B_D\left(x;\frac{a\eps}{2}\right)\subset O$.\\

If instead we have $x=x_0$, then for some $a\in (0,1]$, we have that $\{x_0\}\cup (0,a)\ti C\subset O$. But this former set is easily seen to just be $B_D(x_0;a)$, so $x_0\in B_D(x_0;a)\subset O$. It follows that the topology induced by $D$ is at least as fine as that of $\con_{x_0}([0,1],C)$.\\

Conversely, let $O$ be any open set of $X$ under $D$ and $x$ be any member of $O$. Then we can choose some $\eps\in \ma R^+$ such that $B_D(x;\eps)\subset O$. If $x$ equals some $(a,c_1)\in (0,1]\ti C$, then let $y=(b,c_2)$ be any element of $B\left(a;\frac{\eps}{2}\right)\ti B_d\left(c_1;\frac{\eps}{2a}\right)$ (where the first open ball is taken in $(0,1]$).\\ 

$$D(x,y)=|a-b|+\min(a,b)d(c_1,c_2)$$

$$<\frac{\eps}{2}+\min(a,b)\frac{\eps}{2a}$$

$$\leq \frac{\eps}{2}+a\ti \frac{\eps}{2a}=\eps$$

so $y\in B_D(x;\eps)$, i.e. $x\in B\left(a;\frac{\eps}{2}\right)\ti B_d\left(c_1;\frac{\eps}{2a}\right)\subset B_D(x;\eps)$.\\

If instead $x=x_0$, then $B_D(x;\eps)=\{x_0\}\cup (0,\eps)\ti C$. It follows that the topology of $\con_{x_0}([0,1],C)$ is at least as fine as that induced by $D$ as well, so that the two topologies are the same.\\

\tb{\underline{Part 3:}}\\

Now, for any $\al\in (0,1]$ and any $x,y\in X-\{x_0\}$ we can write $x=(a,c_1), y=(b,c_2)$ for some $a,b\in (0,1]$ and some $c_1,c_2\in C$ we have:

$$D(F(\al)(x),F(\al)(y))=D(F(\al)(a,c_1),F(\al)(b,c_2))$$

$$=D((\al a,c_1),(\al b,c_2))$$

$$=|\al a-\al b|+\min(\al a,\al b)d(c_1,c_2)$$

$$=\al(|a-b|+\min(a,b)d(c_1,c_2))$$

$$=\al D((a,c_1),(b,c_2))=\al D(x,y)$$

and

$$=D(F(\al)(x),F(\al)(x_0))=D(F(\al)(a,c_1),x_0)$$

$$=D((\al a,c_1),x_0)$$

$$=\al a=\al D((a,c_1),x_0)$$

$$=\al D(x,x_0)$$

so $F(\al)$ is a dilation of scale $\al$, $\fa \al\in (0,1]$. Since $F(0)$ is a constant map, it follows that $F(\al)$ is dilation of scale $\al$, $\fa \al\in [0,1]$.\\

Also, given any $\al,\be\in (0,1]$ and any $(a,c)\in (0,1]\ti C$ we have:

$$(F(\al)\circ F(\be))(a,c)=F(\al)(\be a,c)$$

$$=((\al\be)a,c)$$

$$=F(\al\be)(a,c)$$

and $(F(\al)\circ F(\be))(x_0)=F(\al)(x_0)=x_0=F(\al\be)(x_0)$, so $F(\al)\circ F(\be)=F(\al\be), \fa \al,\be\in (0,1]$. Also, $F(\al)$ fixes $x_0, \fa \al\in [0,1]$, so:

$$F(\al)\circ F(0)=F(\al)\circ O_{x_0}$$

$$=O_{x_0}=F(\al\ti 0)=F(0\ti\al)$$

$$=O_{x_0}=O_{x_0}\circ F(\al)$$

$$=F(0)\circ F(\al)$$

so $F(\al)\circ F(\be)=F(\al\be), \fa \al,\be \in [0,1]$, meaning $\{F(\al)\}_{\al\in [0,1]}$ is a dilation family on $(X,D,x_0)$.\\

Next, let any $\al\leq \be\leq \ga\in [0,1]$ be given. For any $(a,c)\in (0,1]\ti C$, if $\al>0$, we have:

$$D(F(\al)(a,c),F(\ga)(a,c))=D((\al a,c),(\ga a, c))$$

$$=|\al a-\ga a|+d(c,c)$$

$$=(\ga-\al)a=(\ga-\be+\be-\al)a$$

$$=|\al-\be|a+|\be-\ga|a$$

$$=|\al a-\be a|+|\be a-\ga a|$$

$$=(|\al a-\be a|+d(c,c))+(|\be a-\ga a|+d(c,c))$$

$$=D((\al a,c),(\be a,c))+D((\be a,c),(\ga a,c))$$

$$=D(F(\al)(a,c),F(\be)(a,c))+D(F(\be)(a,c),F(\ga)(a,c))$$

and if $\al=0$:

$$D(F(\al)(a,c),F(\ga)(a,c))=D(x_0,(\ga a, c))$$

$$=\ga a=\be a+|\be a-\ga a|$$

$$=D(x_0,(\be a,c))+(|\be a-\ga a|+d(c,c))$$

$$=D(F(\al)(a,c),F(\be)(a,c))+D((\be a,c),(\ga a,c))$$

$$=D(F(\al)(a,c),F(\be)(a,c))+D(F(\be)(a,c),F(\ga)(a,c))$$

(the cases where $\be$ or $\ga$ are $0$ are trivial). Also, for any $\al\leq \be\leq \ga\in [0,1]$, we have: 

$$D(F(\al)(x_0),F(\ga)(x_0))=D(x_0,x_0)$$

$$=0=0+0=D(x_0,x_0)+D(x_0,x_0)$$

$$=D(F(\al)(x_0),F(\be)(x_0))+D(F(\be)(x_0),F(\ga)(x_0))$$

So $\{F(\al)\}_{\al\in [0,1]}$ is in fact a linear dilation family on on $(X,D,x_0)$.\\

Lastly, for any $x\in \con_{x_0}([0,1],C)$, we have:

$$x\in \tr{Bd}(B_D(x_0;1))\iff D(x,x_0)=1$$

$$\iff x=(1,c), c\in C\iff x\in \{1\}\ti C$$

so $\tr{Bd}(B_D(x_0;1))=\{1\}\ti C$, completing the proof.\\ \end{proof}

Now, we demonstrate a condition for compact spaces that resembles that of the existence of dilation families that 'contract' and is strong enough to ensure they are metrizable cones (note that dilation families that 'expand' are not possible over compact spaces).\\ 

\begin{prop}
\label{prop: compact space cone condition} Let $(X,\tau)$ be a compact Hausdorff topological space. Suppose for some $x_0\in X$, there exists a continuous monoidal monomomorphism $F: ([0,1],\ti) \ri (C_{x_0}(X),\circ)$ such that: $1) F(0)=O_{x_0}$, $2) F(1)=\tr{Id}_X$, and $3) F(\al)$ is injective $\fa \al\in (0,1]$. Then, with $C=X-F[0,1)(X)$, $\{F(\al)(C)\}_{\al\in [0,1]}$ is a partition of $X$. If $F[0,1)(X)$ is an open subset of $X$, then the function $f':\con_{x_0}([0,1],C)\ri X$ defined so that $x_0\mapsto x_0$ and $(\al,c)\mapsto F(\al)(c)$ is a homeomorphism.\\
\end{prop}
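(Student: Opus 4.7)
The plan is to first establish the partition claim, then (under the openness hypothesis) deduce the homeomorphism. For the partition, given any $x \in X$ I want to identify the relevant $\al$ as $\al(x) := \inf\{\be \in [0,1] : x\in F(\be)(X)\}$. Note this is the infimum, not the supremum---since $F(1) = \tr{Id}_X$ the supremum is trivially $1$ for every $x$. The central technical step is showing the infimum is attained. Since $X$ is compact Hausdorff (hence locally compact), the evaluation map $C_{x_0}(X)\ti X \ri X$ is continuous in the compact-open topology; composing with $F$ gives that $\{(\be, y) \in [0,1]\ti X : F(\be)(y) = x\}$ is closed. Its projection onto $[0,1]$ is exactly $A_x := \{\be : x\in F(\be)(X)\}$, and this projection is closed because projection along the compact factor $X$ is a closed map. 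So $\al(x) \in A_x$, and by injectivity of $F(\al(x))$ when $\al(x) > 0$ there is a unique $c$ with $F(\al(x))(c) = x$; the case $\al(x) = 0$ forces $x = x_0$, handled by $F(0)(C) = \{x_0\}$.

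To complete the partition argument I need $c \in C$ and uniqueness of the pair. If $c \notin C$, write $c = F(\ga)(c')$ with $\ga < 1$; then $x = F(\al(x)\ga)(c')$ puts $\al(x)\ga \in A_x$ with $\al(x)\ga < \al(x)$, contradicting the infimum. For disjointness, if $F(\al)(c) = F(\al')(c')$ with $\al < \al'$ and $c, c' \in C$, then either $\al = 0$ (so $c' = x_0$ by injectivity of $F(\al')$, contradicting $x_0 \notin C$) or $\al > 0$, in which case the identity $F(\al) = F(\al')\circ F(\al/\al')$ combined with injectivity of $F(\al')$ forces $c' = F(\al/\al')(c) \in F[0,1)(X)$, again contradicting $c' \in C$.

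For the homeomorphism statement, the openness of $F[0,1)(X)$ makes $C$ closed in the compact $X$ and hence compact. The continuous map $[0,1]\ti C \ri X$ given by $(\al, c) \mapsto F(\al)(c)$ equals $x_0$ on $\{0\}\ti C$, so it descends through the quotient defining $\con_{x_0}([0,1], C)$ to a continuous $f'$; by the partition result $f'$ is a bijection. Its domain is compact (as a quotient of the compact $[0,1]\ti C$) and its codomain $X$ is Hausdorff, so $f'$ is automatically a homeomorphism. I expect the main obstacle to be recognizing that the correct selection is $\al(x) = \inf A_x$ (rather than $\sup$, which is trivially $1$) and then supplying the compactness argument that this infimum is attained; once that is in hand, the rest is careful bookkeeping using $F(\al)\circ F(\be) = F(\al\be)$ and injectivity.
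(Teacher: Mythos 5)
Your proposal is correct and follows essentially the same route as the paper: the paper's $\Gamma(x)$ is precisely your $\inf A_x$, obtained as the least element of the closed set $\pi_1(f^{-1}(\{x\}))$ via continuity of the evaluation map and closedness of the projection along the compact factor, with injectivity of the positive-scale maps doing the same work in showing the minimal representative lies in $C$ and in establishing disjointness. The homeomorphism step (compact quotient domain, Hausdorff codomain, continuous bijection) is also identical; the only thing the paper spells out that you gloss over is the nonemptiness of $C$, which follows immediately since $F(0)\neq F(1)$ forces $X\neq\{x_0\}$.
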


\begin{proof} 

As $F$ is a monomorphism, $F(0)\neq F(1)$, i.e. $O_{x_0}\neq \tr{Id}_X$. This means $X-\{x_0\}$ is non-empty.\\

As $X$ is compact and $F: ([0,1],\ti)\ri (C_{x_0}(X),\circ)$ is continuous, the function $f: [0,1]\ti X\ri X$ defined as $f(\al,x)=F(\al)(x), \fa (\al,x)\in [0,1]\ti X$ is continuous as well. For any $x\in X$ then (as $X$ is Hausdorff), $f^{-1}(\{x\})$ is a closed subset of $[0,1]\ti X$. Letting $\pi_1:[0,1]\ti X\ri [0,1]$ denote the projection map onto the first component, as $X$ is compact, $\pi_1$ is a closed map, so $\pi_1(f^{-1}(\{x\}))$ is a closed subset of $[0,1], \fa x\in X$. As $[0,1]$ is compact, this means that $\pi_1(f^{-1}(\{x\}))$ always has a least element - let $\Ga: X\ri [0,1]$ be the function that gives this least element for each $x$. As $F(0)=O_{x_0}$, $\Ga(x_0)=0$ and $\Ga(x)>0, \fa x\in X-\{x_0\}$.\\

Now, by the definition of $\Ga$, for any $x\in X-\{x_0\}$, we have:

$$x\in F(\Ga(x))(X)-\cup_{\al\in [0,\Ga(x))} F(\al)(X)$$

$$=F(\Ga(x))(X)-\cup_{\be\in [0,1)} F(\Ga(x)\be)(X)$$

$$=F(\Ga(x))(X)-\cup_{\be\in [0,1)} (F(\Ga(x))\circ F(\be))(X)$$

$$=F(\Ga(x))(X-\cup_{\be\in [0,1)} F(\be)(X))$$

$$=F(\Ga(x))(X-F[0,1)(X))=F(\Ga(x))(C)$$

where the third equality holds as $\Ga(x)>0$ and $F(\al)$ is injective $\fa \al\in (0,1]$.\\

As $x$ was an arbitrary point of $X-\{x_0\}$, the collection $\{F(\al)(C)\}_{\al\in (0,1]}$ covers $X-\{x_0\}$. If $F(\al)(C)$ is empty for any $\al\in (0,1]$, then $C$ itself will be empty so that $X-\{x_0\}=\cup_{\al\in (0,1]} F(\al)(C)=\cup_{\al\in (0,1]} F(\al)(\emptyset)=\cup_{\al\in (0,1]} \emptyset=\emptyset$, a contradiction. So each element of the collection $\{F(\al)(C)\}_{\al\in (0,1]}$ is nonempty. In particular, $C=F(1)(C)$ itself is nonempty.\\

As $F(0)=O_{x_0}$ and $C$ is nonempty, $F(0)(C)=\{x_0\}$, so $\{F(\al)(C)\}_{\al\in [0,1]}$ is a covering of $X$ by nonempty sets. Lastly, for any $\al>\be\in [0,1]$, if $x\in F(\al)(C)\cap F(\be)(C)$, then:

$$x\in F(\al)(C)=F(\al)(X-F[0,1)(X))$$

$$=F(\al)(X)-F(\al)(F[0,1)(X))$$

$$=F(\al)(X)-F[0,\al)(X)$$

$$\subset F(\al)(X)-F(\be)(X)$$

$$\subset F(\al)(X)-F(\be)(C)$$

a contradiction. It follows that all sets in $\{F(\al)(C)\}_{\al\in [0,1]}$ are pairwise disjoint. Hence, $\{F(\al)(C)\}_{\al\in [0,1]}$ is a partition of $X$.\\

Now, suppose $F[0,1)(X)$ is open in $X$. Then $C$ is closed in $X$, and hence compact, making $\con_{x_0}([0,1],C)$ compact as well. As $f$ is continuous, so is its restriction to $[0,1]\ti C$. By what we have already shown, this restriction has image $X$ and has fibres coinciding exactly with the elements (as equivalence classes) of $\con_{x_0}([0,1],C)$. This means $f':\con_{x_0}([0,1],C)\ri X$ is a continuous bijection. As $\con_{x_0}([0,1],C)$ is compact and $X$ is Hausdorff, it follows that $f'$ is a homeomorphism.\\ \end{proof}

\begin{ex} The condition that $F[0,1)(X)$ be open in $X$ is necessary. Consider the family of functions $\{F_{\al}\}_{\al\in [0,1]}$ on $[0,1]^{\om}$ defined so that $F_{\al}((x_i)_{i\in\ma N})=(\al x_i)_{i\in\ma N}$ and equip $[0,1]^{\om}$ with the product topology. $[0,1]^{\om}$ is compact and the family $\{F_{\al}\}_{\al\in [0,1]}$ is easily seen to be a dilation family in $[0,1]^{\om}$ about $(0)_{i\in \ma N}$. However, the set $C=[0,1]^{\om}-\cup_{\al\in [0,1)}F_{\al}([0,1]^{\om})$ consists of exactly those elements $(a_i)_{i\in\ma N}\in [0,1]^{\om}$ such that $\sup\{a_i|i\in \ma N\}=1$. $C$ in this case, is hence not only not a closed subset of $[0,1]^{\om}$, it is a proper dense subset of the ambient space, $[0,1]^{\om}$.\\ \end{ex}

We can apply Proposition \ref{prop: compact space cone condition} to a special case for compact metric spaces:\\

\begin{co}
\label{co: compact metrizable space cone condition} Let $(X,d)$ be a compact metric space. Suppose for some $x_0$ in $X$, there exists a continuous monoidal monomomorphism $F: ([0,1],\ti) \ri (C_{x_0}(X),\circ)$ such that $F(0)=O_{x_0}$ and $F(1)=\tr{Id}_X$. If $d(F(\al)(x),x_0)< d(x,x_0), \fa \al\in [0,1), x\in X-\{x_0\}$ and $x_0$ is not a limit point of $C=X-F[0,1)(X)$ , then for any sufficiently small $\eps\in\ma R^+$, the function $f':\con_{x_0}([0,1],\tr{Bd}(B_D(x_0;\eps)))\ri \overline{B_D(x_0;\eps)}$ defined so that $x_0\mapsto x_0$ and $(\al,c)\mapsto F(\al)(c)$ is a homeomorphism. \end{co}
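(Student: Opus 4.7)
The plan is to apply Proposition \ref{prop: compact space cone condition} to the compact Hausdorff subspace $X' := \overline{B_d(x_0;\eps)}$ for a suitably small $\eps$, equipped with the restrictions $F'(\al) := F(\al)|_{X'}$, and then to identify the set $C'$ produced by that proposition with $\tr{Bd}(B_d(x_0;\eps))$. Since $F(0) = O_{x_0}$, we have $x_0 \in F[0,1)(X)$, so $x_0 \notin C$; combined with the hypothesis that $x_0$ is not a limit point of $C$, this lets us pick $\eps > 0$ with $B_d(x_0;\eps) \cap C = \nul$. The strict contraction hypothesis (with $F(1) = \tr{Id}$ handling $\al = 1$) gives $F(\al)(X') \subseteq X'$, so $F'(\al)$ is a self-map of $X'$ fixing $x_0$, and $F'$ inherits continuity, the monoidal property, and the monomorphism property from $F$ by restriction.

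The key technical step is the identification $F'[0,1)(X') = B_d(x_0;\eps)$. The inclusion $\subseteq$ follows immediately from strict contraction. For the reverse inclusion, let $y \in B_d(x_0;\eps) \setminus \{x_0\}$; reusing the function $\Ga$ from the proof of Proposition \ref{prop: compact space cone condition}, write $y = F(\Ga(y))(x^*)$ for some $x^* \in X$. If $d(x^*, x_0) \leq \eps$ we are done. Otherwise the continuous radial function $h(t) := d(F(t)(x^*), x_0)$ satisfies $h(0) = 0$ and $h(1) > \eps$, so by the Intermediate Value Theorem there exists $\be^* \in (0,1)$ with $h(\be^*) = \eps$. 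The monoid identity $F(\Ga(y)) = F(\Ga(y)/\be^*) \circ F(\be^*)$ then gives $y = F(\Ga(y)/\be^*)(z)$ with $z := F(\be^*)(x^*) \in \tr{Bd}(B_d(x_0;\eps)) \subset X'$ and $\Ga(y)/\be^* < 1$; the strict inequality $\Ga(y) < \be^*$ comes from strict monotonicity of $h$ where $h > 0$, which in turn follows by factoring $F(t) = F(t/s) \circ F(s)$ and applying strict contraction to $F(s)(x^*)$.

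Once $F'[0,1)(X') = B_d(x_0;\eps)$ is established, this set is open in $X'$ and its complement equals $\tr{Bd}(B_d(x_0;\eps))$, so Proposition \ref{prop: compact space cone condition} delivers the desired homeomorphism. The main obstacle is the Intermediate Value construction above, which rests entirely on strict monotonicity of the radial function $h$ along the orbit $t \mapsto F(t)(x^*)$; the same monotonicity also establishes injectivity of each $F'(\al)$ for $\al \in (0,1]$ on pairs of points lying on the sphere $\tr{Bd}(B_d(x_0;\eps))$, which completes the verification of the hypotheses of Proposition \ref{prop: compact space cone condition} in this restricted setting.
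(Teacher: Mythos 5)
Your proposal follows essentially the same route as the paper's: choose $\eps$ so that $B_d(x_0;\eps)$ misses $C$, prove $F[0,1)(\overline{B_d(x_0;\eps)})$ equals the open ball (one inclusion from strict contraction, the other by writing a point as $F(\Gamma(y))(c)$ with $c\in C$ and applying the intermediate value theorem to the radial function along that orbit), and then invoke Proposition \ref{prop: compact space cone condition}. The only substantive differences are that the paper applies the IVT directly on the interval $[\Gamma(y),1]$, whose endpoint values straddle $\eps$, so it never needs your strict-monotonicity lemma; and your closing claim that radial monotonicity yields injectivity of $F(\al)$ on the sphere does not actually follow (monotonicity along a single orbit says nothing about two distinct orbits colliding), though the paper's own proof silently skips verifying that hypothesis of the Proposition as well.
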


\begin{proof} As $x_0$ is not a limit point of $C$, we can choose some $\de>0$ such that the open ball $B_D(x_0;\de)$ is disjoint from $C$. Because $d(F(\al)(x),x_0)<d(x,x_0), \fa \al\in [0,1)$, $F[0,1)(\overline{B_D(x_0;\eps)})\subset B_D(x_0;\eps)$. This means $\overline{B_D(x_0;\eps)}-F[0,1)(\overline{B_D(x_0;\eps)})\supset \overline{B_D(x_0;\eps)}-B_D(x_0;\eps)=\tr{Bd}(B_D(x_0;\eps)))$.\\

By the reasoning explained in the proof of Proposition \ref{prop: compact space cone condition}, for each $x\in B_D(x_0;\eps)$, there exists some $c\in C$ and some $\al\in [0,1)$ such that $x=F(\al)(c)$. Now define $\Ga:[\al,1]\ri \ma R$ as $\Ga(\de)=d(F(\de)(c),x_0), \fa \de\in [\al,1]$. This is a continuous real-valued function on a connected interval. We have $\Ga(1)=d(F(1)(c),x_0)=d(c,x_0)\geq\eps$ (this, by the choice of $\eps$) and $\Ga(\al)=d(F(\al)(c),x_0)=d(x,x_0)<\eps$, so there exists some $\be\in (\al,1]$ such that $\Ga(\be)=\eps$, i.e. $d(F(\be)(c),x_0)=\eps$. Setting $y=F(\be)(x)$, we have $y\in \overline{B_D(x_0;\eps)}$. As $\be>\al$, we have that 

$$x=F(\al)(c)=F\left(\frac{\al}{\be}\right)(F(\be)(c))$$

$$=F\left(\frac{\al}{\be}\right)(y)$$

$$\in F[0,1)(\overline{B_D(x_0;\eps)})$$

This holds $\fa x\in X$, hence it follows directly that $B_D(x_0;\eps)\subset F[0,1)(\overline{B_D(x_0;\eps)})\Ri \overline{B_D(x_0;\eps)}-F[0,1)(\overline{B_D(x_0;\eps)})\subset \tr{Bd}(B_D(x_0;1)))$, so $\overline{B_D(x_0;\eps)}-F[0,1)(\overline{B_D(x_0;\eps)})= \tr{Bd}(B_D(x_0;\eps)))$. Since $F(1)=\tr{Id}_X$, $F[0,1](\overline{B_D(x_0;\eps)})\subset \overline{B_D(x_0;\eps)}$, so the result now follows directly from Proposition \ref{prop: compact space cone condition}.\\ \end{proof}

Now, we look at a similar condition as in the statement of Proposition \ref{prop: compact space cone condition} but modified for locally compact spaces with dilation families that both 'contract' and 'expand':\\

\begin{prop}
\label{prop: dilation family existence - locally compact case} Let $(X,\tau)$ be a locally compact Hausdorff space. Suppose for some $x_0$ in $X$ there exists a continuous monoidal monomorphism, $F: ([0,\nf),\ti) \ri (C_{x_0}(X),\circ)$, such that $F(0)=O_{x_0}$ and $F(1)=\tr{Id}_X$. If there is a compact set, $D$, such that $x_0\in \tr{Int}(D)$ and $F[0,1)(D)\subset \tr{Int}(D)$, then $F[0,1)(D)=\tr{Int}(D)$ and the function $f':\con_{x_0}([0,\nf),\tr{Bd}(D))\ri X$ defined so that $x_0\mapsto x_0$ and $(\al,c)\mapsto F(\al)(c)$ is a homeomorphism.\end{prop}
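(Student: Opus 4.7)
The plan is to extend the strategy of Proposition \ref{prop: compact space cone condition} to this unbounded, locally compact setting. The essential new tool is that each $F(\al)$ with $\al > 0$ is a self-homeomorphism of $X$, since $F(\al)\circ F(1/\al)=F(1)=\tr{Id}_X$ forces $F(\al)$ to be bijective with continuous inverse $F(1/\al)$. Combined with local compactness of $X$, the continuity of $F:[0,\nf)\ri C_{x_0}(X)$ in the compact-open topology also yields joint continuity of the evaluation map $(\al,x)\mapsto F(\al)(x)$, which I use throughout.

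I would first prove $F[0,1)(D)=\tr{Int}(D)$; one inclusion is the hypothesis. For the reverse, fix $x\in\tr{Int}(D)\setminus\{x_0\}$ and consider the continuous path $g_x:[0,\nf)\ri X$ defined by $g_x(\al)=F(\al)(x)$, together with the closed preimage $A_x=g_x^{-1}(D)$. The crux is that $A_x$ is bounded: if $\al_n\ri\nf$ with $g_x(\al_n)\in D$, then $x=F(1/\al_n)(g_x(\al_n))$ for all $n$, while compactness of $D$ yields a cluster point $y\in D$ of $\{g_x(\al_n)\}$, and joint continuity then forces $F(1/\al_n)(g_x(\al_n))\to F(0)(y)=x_0$ along a suitable subnet, giving $x=x_0$, a contradiction. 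Hence $A_x$ is compact; let $\al^*=\max A_x$. The point $c:=g_x(\al^*)$ lies in $D$, and every neighborhood of $c$ contains some $g_x(\al)\notin D$ for $\al$ just above $\al^*$, so $c\in\tr{Bd}(D)$. Because $g_x(1)=x\in\tr{Int}(D)$ and $g_x$ is continuous, $1$ is interior to $A_x$, so $\al^*>1$. Applying the homeomorphism $F(1/\al^*)$ to the identity $c=F(\al^*)(x)$ yields $x=F(1/\al^*)(c)$ with $1/\al^*\in[0,1)$, placing $x$ in $F[0,1)(\tr{Bd}(D))\subset F[0,1)(D)$.

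For the homeomorphism, the identical argument applied to an arbitrary $x\in X\setminus\{x_0\}$ produces $\be>0$ and $c\in\tr{Bd}(D)$ with $x=F(\be)(c)$, so $f'$ is surjective. Uniqueness follows from the identity just proved: if $F(\be)(c)=F(\be')(c')$ with $c,c'\in\tr{Bd}(D)$ and $\be\leq\be'$, then applying $F(1/\be')$ gives $F(\be/\be')(c)=c'$, and $\be/\be'<1$ would force $c'\in F[0,1)(\tr{Bd}(D))\subset\tr{Int}(D)$, impossible. Hence $\be=\be'$ and injectivity of $F(\be)$ gives $c=c'$. Thus $f'$ is a continuous bijection, and I would upgrade this to a homeomorphism by verifying properness. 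Given compact $K\subset X$, the family $\{F(n)(\tr{Int}(D))\}_{n\in\ma Z^+}$ is an open cover of $X$ (each $F(n)$ is a homeomorphism, and $F(\be)(c)\in F(n)(\tr{Int}(D))$ whenever $n>\be$), yielding $N$ with $K\subset F(N)(D)=F[0,N](D)$. Using the representation $D=F[0,1](\tr{Bd}(D))\cup\{x_0\}$ together with the uniqueness above, one checks $(f')^{-1}(F[0,N](D))=\con_{x_0}([0,N],\tr{Bd}(D))$; this set is compact because $\tr{Bd}(D)$ is closed in the compact set $D$. Then $(f')^{-1}(K)$ is closed inside it, hence compact, so $f'$ is proper. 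A continuous proper bijection between locally compact Hausdorff spaces is closed, and therefore a homeomorphism.

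The main obstacle is the boundedness of $A_x$: proving that the orbit $\{F(\al)(x):\al\geq 0\}$ eventually exits every compact set for $x\neq x_0$. This is what turns $[0,\nf)$ into a faithful radial coordinate and is the only place where the unboundedness of the index set must really be reckoned with; every other step is a bookkeeping combination of the monoid identity, the contracting hypothesis $F[0,1)(D)\subset\tr{Int}(D)$, and the uniqueness of boundary representatives.
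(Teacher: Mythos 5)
Your argument is correct, but it departs from the paper's proof at the two points where the real work happens, so it is worth recording the differences. (i) To show the orbit of a point $x\neq x_0$ escapes $D$, the paper first proves that no such monomorphism $F$ can exist on a compact space, and then applies this to the closure of the orbit $F[0,\nf)(x)$, which would be a compact $F$-invariant set if the orbit stayed inside $D$; a connectedness argument then yields a crossing of $\tr{Bd}(D)$. You instead argue directly: if $F(\al_n)(x)\in D$ with $\al_n\ri\nf$, a cluster point $y\in D$ of the $F(\al_n)(x)$ together with joint continuity of evaluation forces $x=F(1/\al_n)(F(\al_n)(x))\ri F(0)(y)=x_0$, a contradiction. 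Your version is shorter, proves the strictly stronger fact that $A_x=g_x^{-1}(D)$ is bounded, and the resulting last-exit time $\al^*$ (with $\al^*>1$ when $x\in\tr{Int}(D)$, since $1$ is interior to $A_x$) gives a more constructive route to the representation $x=F(1/\al^*)(c)$ than the paper's ``some crossing exists, and uniqueness plus the contraction hypothesis pins down its range.'' (ii) For the homeomorphism, the paper restricts to $D$, invokes Proposition \ref{prop: compact space cone condition} to get that the truncated map $f'_{[0,1]}$ is a homeomorphism onto $D$, and then globalizes via the rescaling maps $G_{\al}$ and an explicit check that $f'$ sends basic open sets to open sets; you prove properness by trapping any compact $K$ inside some $F(N)(\tr{Int}(D))$ (using that these sets form an increasing open cover), identifying $(f')^{-1}(F[0,N](D))$ with the compact truncated cone via the uniqueness of boundary representatives, and concluding with the fact that a proper continuous bijection into a locally compact Hausdorff space is closed. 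Both are valid; yours is self-contained and avoids routing through the compact-space proposition, at the price of invoking the proper-implies-closed lemma, whereas the paper reuses its earlier machinery. The uniqueness-of-representation step and the reliance on joint continuity of the evaluation map (from local compactness and the compact-open topology) are essentially identical in the two proofs.
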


\begin{proof} First, we show that $X$ cannot be compact. Suppose by way of contradiction that $X$ is compact. As $F$ is a monomorphism and $F(0)=O_{x_0}, F(1)=\tr{Id}_X$, we must have that $X\neq\{x_0\}$, i.e. we can choose some point $y\in X$ so that $X-\{y\}$ is an open neighborhood of $x_0$. We also have $F(0)(X)=O_{x_0}(X)=\{x_0\}$, so $F(0)$ belongs to the subbasis element $S(X,X-\{y\})$. As $F$ is continuous, this means we can choose some $\al>0$, such that $F(\be)\in S(X,X-\{y\}), \fa \be\in [0,\al)$. In particular: 

$$F\left(\frac{\al}{2}\right)\in S(X,X-\{y\})$$

$$\Ri F\left(\frac{\al}{2}\right)(X)\subset X-\{y\}\subsetneq X$$

However, we also have that $F\left(\frac{\al}{2}\right)\circ F\left(\frac{2}{\al}\right)=F(1)=\tr{Id}_X$, so we must have $F\left(\frac{\al}{2}\right)(X)=X$ as well, a contradiction. So $X$ cannot be compact.\\

Now, let any point of $X-\{x_0\}$, $x$, be given. I claim that $F[0,\nf)(x)\not\subset D$. Suppose by way of contradiction that $F[0,\nf)(x)\subset D$. This means that $\overline{F[0,\nf)(x)}$ is compact. Also, given any $\al\in [0,\nf)$, we have (by continuity) that 

$$F(\al)(\overline{F[0,\nf)(x)})\subset \overline{F(\al)(F[0,\nf)(x))}$$

$$=\overline{(F(\al)\circ F[0,\nf))(x)}\subset \overline{F[0,\nf)(x)}$$

so each $F(\al)$ maps $\overline{F[0,\nf)(x)}$ into itself. Lastly, if $F(\al)(x)=F(\be)(x)$ for any $\al\neq \be\in [0,\nf)$, then (supposing WLOG that $\al<\be$) $F\left(\frac{\al}{\be}\right)(x)=x$, so $F\left(\left(\frac{\al}{\be}\right)^n\right)(x)=x, \fa n\in \ma Z^+$, but, as $x\neq x_0$, this contradicts the fact that $\lim_{\ga\ri 0} F(\ga)(x)=x_0$, so $F(\al)(x)\neq F(\be)(x), \fa \al\neq \be\in [0,\nf)$. This means the restricted map, $F': ([0,\nf),\ti) \ri (C_{x_0}(\overline{F[0,\nf)(x)}),\circ)$ is also a continuous monoidal monomorphism. However, this contradicts the result proven in the first paragraph. So we must have $F[0,\nf)(x)\not\subset D$.\\

Next, I claim that $F[0,\nf)(x)\cap \tr{Bd}(D)\neq \emptyset$. For otherwise, $X-D$ and $\tr{Int}(D)$ are a pair of disjoint open sets both of which intersect $F[0,\nf)(x)$ (by the last paragraph), contradicting the connectedness of $F[0,\nf)(x)$. So $F(\al)(x)\in \tr{Bd}(D)$ for some $\al\in (0,\nf)$, i.e. $F(\be)(c)=x$ for some $c\in \tr{Bd}(D)$ with $\be=\frac{1}{\al}$. I claim that $F(\al)(c_1)=F(\be)(c_2)$ for any $c_1,c_2\in \tr{Bd}(D)$ only if $\al=\be$ and $c_1=c_2$ or simply just $\al=\be=0$. As if one of $\al$ and $\be$ are $0$, then so must the other be (as neither of $c_1$ and $c_2$ can equal $x_0$). And if both $\al$ and $\be$ are positive, then (assuming WLOG that $\al<\be$) we have:

$$c_2=F(1)(c_2)=F\left(\frac{1}{\be}\right)(F(\be)(c_2))$$

$$=F\left(\frac{1}{\be}\right)(F(\al)(c_1))=F\left(\frac{\al}{\be}\right)(c_1)$$

but as $c_1\in \tr{Bd}(D)\subset D$ and $\frac{\al}{\be}\in (0,1)$, this means $c_2\in F[0,1)(D)\subset \tr{Int}(D)$, contradicting the assumption that $c_2\in \tr{Bd}(D)$.\\

I also claim that if $x\in \tr{Int}(D)-\{x_0\}$, then $\be$ must lie in $(0,1)$ for otherwise, $\frac{1}{\be}\in (0,1)$ (if $\be=1$, then $x=F(1)(c)=c$) and

$$c=F(1)(c)=F\left(\frac{1}{\be}\right)(F(\be)(c))=F\left(\frac{1}{\be}\right)(x)$$

$$\in F\left(\frac{1}{\be}\right)(\tr{Int}(D))\subset F\left(\frac{1}{\be}\right)(D) \subset \tr{Int}(D)$$

a contradiction. It also follows from this that $\tr{Int}(D)\subset F[0,1)(\tr{Bd}(D))\subset F[0,1)(D)$, meaning $\tr{Int}(D)=F[0,1)(D)$.\\

This holds $\fa x\in F[0,1)(D)-\{x_0\}$ and $x_0=F(0)(c)$ for any $c\in \tr{Bd}(D)$, hence it follows that $F[0,1)(D)\subset F[0,1)(\tr{Bd}(D))\Ri F[0,1)(D)=F[0,1)(\tr{Bd}(D))$. Since $F(1)=\tr{Id}_X$, $F[0,1](D)\subset D$. Additionally, as $\tr{Bd}(D)\neq \emptyset$ and $d(F(\al)(x),x_0)<d(x,x_0), \fa \al\in [0,1), \fa x\in U-\{x_0\}$, the restricted map, $F': ([0,\nf),\ti) \ri (C_{x_0}(D),\circ)$ is a continuous monoidal monomorphism as well. So by Proposition \ref{prop: compact space cone condition}, the restricted map, $f'_{[0,1]}:\con_{x_0}([0,1],\tr{Bd}(D))\ri D$, is a homeomorphism.\\

We now need to show that the map $f':\con_{x_0}([0,\nf),\tr{Bd}(D))\ri X$ is a homeomorphism. As $X$ is locally compact and Hausdorff, the restricted evaluation map, $f: [0,\nf)\ti \tr{Bd}(D)\ri X$ is continuous. By what we have already shown, it is also surjective, and its fibres coincide with the members of $\con_{x_0}([0,\nf),\tr{Bd}(D))$, so $f'$ is a continuous bijection. Let $O$ be any open basis element in $\con_{x_0}([0,\nf),\tr{Bd}(D))$. There exists some $\al>0$ such that $O\subset \con_{x_0}([0,\al),\tr{Bd}(D))$. Let $G_{\al}$ denote the map $:\con_{x_0}([0,\nf),\tr{Bd}(D))\ri \con_{x_0}([0,\nf),\tr{Bd}(D))$ that fixes $x_0$ and sends any point $(a,c)$ to $\left(\frac{a}{\al},c\right)$. This map is clearly a homeomorphism. It is now trivial to check that $f'(O)=(F(\al)\circ f'_{[0,1]}\circ G_{\al})(O)$. $G_{\al}$ is a homeomorphism. As $O$ is an open subset of $\con_{x_0}([0,\al),\tr{Bd}(D))$, $G_{\al}(O)$ is an open subset of $\con_{x_0}([0,1),\tr{Bd}(D))$. As $f_{[0,1]}'$ is a homeomorphism, $f_{[0,1]}'(G_{\al}(O))$ is an open subset of $F[0,1)(D)$, which is open in $X$. Finally, as $F(\al)$ is a homeomorphism, we have that $f'(O)$ is open in $X$, making $f'$ a homeomorphism.\\ \end{proof}

There is a similar convenient Corollary to Proposition \ref{prop: dilation family existence - locally compact case} in the case where the locally compact space is also a metric space:

\begin{co}
\label{co: dilation family existence - locally compact metric case} Let $(X,d)$ be a locally compact metric space. Suppose for some $x_0$ in $X$ there exists a continuous monoidal monomorphism, $F: ([0,\nf),\ti) \ri (C_{x_0}(X),\circ)$, such that $F(0)=O_{x_0}$ and $F(1)=\tr{Id}_X$. If $d(F(\al)(x),x_0)< d(x,x_0), \fa \al\in [0,1), x\in X-\{x_0\}$, then for all sufficiently small $\eps>0$, the function $f':\con_{x_0}([0,\nf),\tr{Bd}(B(x_0;\eps))\ri X$ defined so that $x_0\mapsto x_0$ and $(\al,c)\mapsto F(\al)(c)$ is a homeomorphism.\end{co}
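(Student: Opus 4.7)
The plan is to invoke Proposition \ref{prop: dilation family existence - locally compact case} with the compact set $D$ taken to be a sufficiently small closed metric ball $\overline{B(x_0;\eps)}$. Using local compactness at $x_0$, I can fix a compact neighborhood $K$ of $x_0$ in $X$; since $X$ is Hausdorff, $K$ is closed, so for every $\eps>0$ small enough that $B(x_0;\eps)\subset \tr{Int}(K)$, the closure $\overline{B(x_0;\eps)}$ is a closed subset of $K$ and hence compact. Set $D=\overline{B(x_0;\eps)}$; then $x_0\in B(x_0;\eps)\subset \tr{Int}(D)$ is automatic.

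Next, I would verify the key hypothesis $F[0,1)(D)\subset \tr{Int}(D)$. Given $\al\in [0,1)$ and $x\in D$, either $x=x_0$, in which case $F(\al)(x_0)=x_0\in \tr{Int}(D)$, or $x\neq x_0$ and the contraction assumption yields $d(F(\al)(x),x_0)<d(x,x_0)\leq \eps$, so $F(\al)(x)\in B(x_0;\eps)\subset \tr{Int}(D)$. Proposition \ref{prop: dilation family existence - locally compact case} then applies and delivers that $f':\con_{x_0}([0,\nf),\tr{Bd}(D))\ri X$ is a homeomorphism.

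The remaining — and main — step is to identify $\tr{Bd}(D)$ with $\tr{Bd}(B(x_0;\eps))$, which is not automatic, since in an arbitrary metric space the closure of an open ball need not have the open ball as its interior. One inclusion is immediate: from $B(x_0;\eps)\subset \tr{Int}(D)\subset D=\overline{B(x_0;\eps)}$ we already get $\tr{Bd}(D)\subset \tr{Bd}(B(x_0;\eps))$. For the reverse, I would combine the contraction estimate just used (which gives $F[0,1)(D)\subset B(x_0;\eps)$) with the equality $F[0,1)(D)=\tr{Int}(D)$ obtained inside the proof of Proposition \ref{prop: dilation family existence - locally compact case}; together these force $\tr{Int}(D)=B(x_0;\eps)$, and hence $\tr{Bd}(D)=\tr{Bd}(B(x_0;\eps))$. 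The strict contraction hypothesis is exactly what is needed to rule out the pathology of $\tr{Int}(D)$ being strictly larger than $B(x_0;\eps)$, so this is where the real content of the corollary sits.
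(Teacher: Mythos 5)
Your proposal follows the same route as the paper's proof: choose $\eps$ small enough that $\overline{B(x_0;\eps)}$ is compact, check via the strict contraction hypothesis that $F[0,1)(\overline{B(x_0;\eps)})\subset B(x_0;\eps)\subset \tr{Int}(\overline{B(x_0;\eps)})$, and invoke Proposition \ref{prop: dilation family existence - locally compact case} with $D=\overline{B(x_0;\eps)}$. The one place you go beyond the paper is the final identification of $\tr{Bd}(D)$ with $\tr{Bd}(B(x_0;\eps))$: the paper's two-line proof asserts the conclusion "follows directly," even though the Proposition only delivers a homeomorphism out of $\con_{x_0}([0,\nf),\tr{Bd}(\overline{B(x_0;\eps)}))$, and in a general metric space $\tr{Int}(\overline{B(x_0;\eps)})$ can strictly contain $B(x_0;\eps)$. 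Your resolution --- combining $F[0,1)(D)\subset B(x_0;\eps)$ with the Proposition's conclusion $F[0,1)(D)=\tr{Int}(D)$ to force $\tr{Int}(D)=B(x_0;\eps)$, hence $\tr{Bd}(D)=\tr{Bd}(B(x_0;\eps))$ --- is correct and is exactly the detail needed to make the corollary's statement, as written, follow from the Proposition.
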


\begin{proof} Choose any $\eps$ small enough that $B(x_0;\eps)$ has compact closure. Then, by the shrinking condition, $F[0,1)(\overline{B(x_0;\eps)})\subset B(x_0;\eps)\subset \tr{Int}(\overline{B(x_0;\eps)})$. The result now follows directly from Proposition \ref{prop: dilation family existence - locally compact case}.\\ \end{proof} 

We observe that, if we use the result in Proposition \ref{prop: metrizable cones} in combination with either Corollary \ref{co: compact metrizable space cone condition} or Corollary \ref{co: dilation family existence - locally compact metric case}, we get resulting dilation families on spaces satisfying the Corollary statement conditions that are also linear dilation families. So it turns out that in many situations, the existence of a dilation family on a space can imply a 'flatness' about the center of dilation.\\

We now look at some additional characterizations of dilation families and similar structures that do not involve the same reliance on converting spaces into metrizable cones.\\

\begin{prop}
\label{prop: general space condition} Let $(X,d)$ be a metric space. Let $I$ be a pure set containing $0$ and $F:(I,\ti)\ri (X^X,\circ)$ be a monoidal homomorphism with $F(0)=O_{x_0}$. If for each $x\in X$, there exists some $\al_x, A_x>0$ such that $\frac{\be d(x,y)}{A_x}\leq d(F(\be)(x),F(\be)(y))\leq A_x\be d(x,y), \fa y\in X, \fa \be\in (0,\al_x)$ and $\lim_{\substack{\al\ri 1\\ \al\in I}} F(\al)(x)$ exists, then the function $D:X\ti X\ri \ma R_{\geq 0}$ defined as $(x,y)\ri \lim_{\al\ri 0^+}\sup\left\{\frac{d(F(\be)(x),F(\be)(y))}{\be}\Big|\be\in I\cap (0,\al]\right\}$ defines a metric on $X$ that induces the same topology as $d$ and under which the collection $\{F(\al)\}_{\al\in I}$ forms a dilation family on $(X,d,x_0)$.\end{prop}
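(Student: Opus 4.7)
My plan is to treat four tasks in order: confirm the limit defining $D$ is finite and well-defined, verify the metric axioms, establish topological equivalence with $d$, and finally check the four clauses of the dilation-family definition under $D$. The main obstacle is the scaling identity in the last task, which requires reindexing a supremum over $I$ and will lean crucially on purity.

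\textbf{Well-definedness, metric, and topology.} Fix $x,y\in X$ and set $S(\ga)=\sup\{d(F(\be)(x),F(\be)(y))/\be\,:\,\be\in I\cap(0,\ga]\}$. Dividing the hypothesized pinching by $\be$ gives
$$\frac{d(x,y)}{A_x}\leq \frac{d(F(\be)(x),F(\be)(y))}{\be}\leq A_x d(x,y)\qquad (\be\in(0,\al_x))$$
so $S(\ga)$ is a nonnegative real whenever $\ga\leq \al_x$. Since $\ga\mapsto S(\ga)$ is non-decreasing in $\ga$, the limit $D(x,y)=\lim_{\ga\ri 0^+}S(\ga)=\inf_{\ga>0}S(\ga)$ exists, and the same pinching descends to $d(x,y)/A_x\leq D(x,y)\leq A_x d(x,y)$. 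Symmetry and $D(x,x)=0$ are immediate from the definition, identity of indiscernibles follows from the lower bound, and the triangle inequality is obtained by applying the triangle inequality for $d$ pointwise in $\be$, invoking subadditivity of the supremum, and passing to the limit. The pinching above is a local bi-Lipschitz sandwich at each $x$, so $D$ and $d$ induce the same topology.

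\textbf{Three easy dilation-family clauses.} Clause 2 follows from $F(\al)(x_0)=F(\al)(F(0)(y))=(F(\al)\circ F(0))(y)=F(0)(y)=x_0$, using $F(0)=O_{x_0}$ and monoidality. Clause 3 is just the assumption that $F$ is a monoid homomorphism. Clause 4 is given in the hypothesis as a limit in $(X,d)$, and because $D$ and $d$ induce the same topology, it is equally a limit in $(X,D)$.

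\textbf{The main obstacle: the scaling identity.} It remains to show that $D(F(\al)(x),F(\al)(y))=\al D(x,y)$ for every $\al\in I$. The case $\al=0$ is trivial since both sides are zero. For $\al>0$, applying $F(\be)\circ F(\al)=F(\al\be)$ and substituting $\eta=\al\be$ converts the defining limit into
$$D(F(\al)(x),F(\al)(y))=\al \lim_{\ga\ri 0^+}\sup_{\eta\in \al I\cap(0,\al\ga]}\frac{d(F(\eta)(x),F(\eta)(y))}{\eta}.$$
To finish, one must identify the indexing set with $I\cap(0,\al\ga]$ for sufficiently small $\ga$. This is exactly where purity is needed. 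If $I$ is closed under nonzero division (case 3 of Definition 2.3), then $\al I=I$ for any $\al>0\in I$. If instead $I\subset[0,1]$ with $\eta/\mu\in I$ whenever $\eta<\mu$ in $I$ (case 2), then for any $\eta\in I$ with $\eta<\al$ we have $\eta/\al\in I$, so $\eta=\al\cdot(\eta/\al)\in \al I$; restricting to $\ga<1$ forces $\al\ga<\al$, placing every $\eta\in I\cap(0,\al\ga]$ in $\al I$. (Case 1 is excluded since $0\in I$.) Once the two sets are identified, the expression collapses to $\al D(x,y)$, and the proof is complete.
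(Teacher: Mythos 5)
Your proposal is correct and follows essentially the same route as the paper's proof: establish the metric axioms via the bi-Lipschitz pinching and subadditivity of suprema, get topological equivalence from the same sandwich, and obtain the scaling identity by the substitution $\eta=\al\be$ in the supremum. In fact your case analysis showing $\al I\cap(0,\al\ga]=I\cap(0,\al\ga]$ for small $\ga$ makes explicit the one step the paper dismisses with the parenthetical ``this works because $I$ is a pure set,'' and you also verify the remaining dilation-family clauses that the paper leaves implicit.
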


\begin{proof} First, we show that $D$ is a metric on $X$. 

Given the inequality condition in the proposition statement, $D$ is clearly a well-defined nonnegative real-valued function.\\

If $x=y$, then 

$$\frac{d(F(\be)(x),F(\be)(y))}{\be}=\frac{d(F(\be)(x),F(\be)(x))}{\be}=\frac{0}{\be}=0$$

so 

$$D(x,y)=\lim_{\al\ri 0^+} \sup\{0\}=\lim_{\al\ri 0^+} 0=0$$

Conversely, if $D(x,y)=0$, there exists a sequence of positive real numbers, $\{a_n\}_{n\in\ma Z^+}$ such that $\lim_{n\ri \nf} a_n=0$ and 

$$\sup\left\{\frac{d(F(\be)(x),F(\be)(y))}{\be}\Big|\be\in I\cap (0,a_n]\right\}< \frac{1}{n} \fa n\in \ma Z^+$$

This means 

$$\frac{d(F(a_n)(x),F(a_n)(y))}{a_n}<\frac{1}{n}, \fa n\in \ma Z^+\Ri\lim_{n\ri \nf}\frac{d(F(a_n)(x),F(a_n)(y))}{a_n}=0$$

As $\lim_{n\ri \nf} a_n=0$, there exists some $M$ such that $\fa n\geq M, a_n<\al_x$. Since 

$$\frac{d(x,y)}{A_x}\leq \frac{d(F(\be)(x),F(\be)(y))}{\be}, \fa \be<\al_x$$

we have:

$$\frac{d(x,y)}{A_x}\leq \lim_{\substack{n\ri \nf\\ n\geq M}} \frac{d(F(a_n)(x),F(a_n)(y))}{a_n}$$

$$=\lim_{n\ri \nf}\frac{d(F(a_n)(x),F(a_n)(y))}{a_n}=0$$

It follows that $d(x,y)=0\Ri x=y$, so $D$ satisfies the identity of indiscernibles.\\

$D$ is also trivially seen to be symmetric.\\

And given any $x,y,z\in X$ and any $\be\in \ma R^+$, we have 

$$\frac{d(F(\be)(x),F(\be)(z))}{\be}\leq \frac{d(F(\be)(x),F(\be)(y))}{\be}+\frac{d(F(\be)(y),F(\be)(z))}{\be}$$

So for any $\al\in \ma R^+$, we have:

$$\sup\left\{\frac{d(F(\be)(x),F(\be)(z))}{\be}\Big|\be\in I\cap (0,\al]\right\}$$

$$\leq \sup\left\{\frac{d(F(\be)(x),F(\be)(y))}{\be}+\frac{d(F(\be)(y),F(\be)(z))}{\be}\Big|\be\in I\cap (0,\al]\right\}$$

$$\leq \sup\left\{\frac{d(F(\be)(x),F(\be)(y))}{\be}\Big|\be\leq \al\right\}+\sup\left\{\frac{d(F(\be)(y),F(\be)(z))}{\be}\Big|\be\in I\cap (0,\al]\right\}$$

Taking limits as $\al\ri 0$ on both sides gives us $D(x,z)\leq D(x,y)+D(y,z)$, making $D$ a metric on $X$.\\

Now we show that $D$ induces the same topology on $X$ as $d$. Let any $x\in X$ and any $\eps>0$ be given. If $y$ is any point in $B_d\left(x;\frac{\eps}{2A_x}\right)$ then for any $\be<\al_x$, we have:

$$\frac{d(F(\be)(x),F(\be)(y))}{\be}\leq A_xd(x,y)<\frac{\eps}{2}$$

So:

$$D(x,y)=\lim_{\al\ri 0^+}\sup\left\{\frac{d(F(\be)(x),F(\be)(y))}{\be}\Big|\be\in I\cap (0,\al]\right\}$$

$$\leq \sup\left\{\frac{d(F(\be)(x),F(\be)(y))}{\be}\Big|\be\in I\cap (0,\al_x]\right\}\leq \frac{\eps}{2}<\eps$$

meaning $B_d\left(x;\frac{\eps}{2A_x}\right)\subset B_D(x;\eps)$.\\

Similarly, let $y$ be any point in $B_D\left(x;\frac{\eps}{A_x}\right)$. We have $D(x,y)<\frac{\eps}{A_x}\Ri \lim_{\al\ri 0^+}\sup\left\{\frac{d(F(\be)(x),F(\be)(y))}{\be}\Big|\be\in I\cap (0,\al]\right\}<\frac{\eps}{A_x}$. This means we can choose some $\ga>0$ such that $\sup\left\{\frac{d(F(\be)(x),F(\be)(y))}{\be}\Big|\be\leq \ga\right\}<\frac{\eps}{A_x}$, so if $\de$ is any positive real that is less than both $\ga$ and $\al_x$, we have:

$$\frac{d(x,y)}{A_x}\leq \frac{d(F(\de)(x),F(\de)(y))}{\de}<\frac{\eps}{A_x}$$

$$\Ri d(x,y)<\eps$$

so $y\in B_d(x;\eps)$, meaning $B_D\left(x;\frac{\eps}{A_x}\right)\subset B_d(x;\eps)$. So $d$ and $D$ both induce the same topology on $X$.\\ 

Lastly, given any $\ga$ in $I$ and any $x,y\in I$, we have:

$$D(F(\ga)(x),F(\ga)(y))=\lim_{\al\ri 0^+}\sup\left\{\frac{d(F(\be)(F(\ga)(x)),F(\be)(F(\ga)(y)))}{\be}\Big|\be\in I\cap (0,\al]\right\}$$

$$=\ga\lim_{\al\ri 0^+}\sup\left\{\frac{d(F(\be\ga)(x),F(\be\ga)(y))}{\be\ga}\Big|\be\in I\cap (0,\al]\right\}$$

$$=\ga\lim_{\al\ri 0^+}\sup\left\{\frac{d(F(\de)(x)),F(\de)(y))}{\de}\Big|\de\in I\cap (0,\al]\right\}$$

$$=\ga D(x,y)$$

where $\de$ in the second-last expression is a substitution for $\be\ga$ (this works because $I$ is a pure set). So $F(\al)$ is a dilation of scale $\al$ under $D$ for each $\al\in I$, completing the proof.\\ \end{proof}

\begin{prop}
\label{prop: translation-invariant metric} Let $(X,*)$ be a topological group with identity $x_0$. Let $d$ be a metric on $X$ that induces said topology. Suppose that there is a dilation family, $\{T_{\al}\}_{\al\in [0,\nf)}$, on $(X,d,x_0)$ consisting of group homomorphisms on $X$ with respect to $*$.  If the collection of translation maps $\{\rho_c\}_{c\in X}$ (where each $\rho_c$ is the map $x\mapsto c*x$) is equicontinuous with respect to $d$, then the function $D:X\ti X\ri \ma R_{\geq 0}$ defined as $D(x,y)=\sup\{d(c*x,c*y)| \fa c\in X\}$ is a metric on $X$ that induces the same topology on $X$ as $d$ does, and $\{T_{\al}\}_{\al\in [0,\nf)}$ is a dilation family on $(X,D,x_0)$. Furthermore, the norm on $X$ defined as $||x||=D(x,x_0), \fa x\in X$ is a homogeneous norm on $X$.\end{prop}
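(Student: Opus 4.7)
The plan is to prove, in order: (i) $D$ is a real-valued metric Lipschitz-equivalent to $d$ (and hence inducing the same topology); (ii) $\{T_\alpha\}$ is a dilation family on $(X,D,x_0)$; and (iii) $\|\cdot\|$ is a homogeneous norm. Every step rests on a single identity obtained by combining the homomorphism property $T_\alpha(c*x)=T_\alpha(c)*T_\alpha(x)$ with the $d$-dilation relation:
\[
\alpha\,d(c*x,c*y)=d(T_\alpha(c)*T_\alpha(x),\,T_\alpha(c)*T_\alpha(y)), \quad \alpha>0,\; c,x,y\in X.
\]
Observe also that, since $[0,\infty)$ contains $1/\alpha$, each $T_\alpha$ with $\alpha>0$ has $T_{1/\alpha}$ as a two-sided inverse and is thus a bijection of $X$, so $T_\alpha(X)=X$.

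For (i), I would use uniform equicontinuity to select $\delta>0$ such that $d(u,v)<\delta$ forces $d(c*u,c*v)<1$ uniformly in $c$. Given $x\neq y$, set $\alpha=\delta/(2d(x,y))$; then $d(T_\alpha(x),T_\alpha(y))=\alpha d(x,y)<\delta$, and the central identity gives $\alpha d(c*x,c*y)<1$ for every $c$, so $D(x,y)\leq 2d(x,y)/\delta$. Together with $D(x,y)\geq d(x,y)$ (from $c=x_0$), this is Lipschitz equivalence. The remaining metric axioms for $D$ are inherited term-by-term: symmetry is immediate; identity of indiscernibles follows from $D\geq d$; and the triangle inequality is the supremum of the $d$-triangle inequalities. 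For (ii), taking $\sup_{c\in X}$ in the central identity and using $T_\alpha(X)=X$ yields $\alpha D(x,y)=D(T_\alpha(x),T_\alpha(y))$ directly for $\alpha>0$; the case $\alpha=0$ is trivial, and the remaining dilation family axioms are independent of the metric (or transfer under topological equivalence for the continuity condition at $1$).

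For (iii), the decisive structural property is left-invariance $D(a*x,a*y)=D(x,y)$, which comes from the reparameterization $c\mapsto c*a$ in the defining sup. Then $\|T_\alpha(x)\|=D(T_\alpha(x),T_\alpha(x_0))=\alpha\|x\|$ gives homogeneity, $\|x*y\|\leq D(x*y,x*x_0)+D(x,x_0)=\|y\|+\|x\|$ gives subadditivity, and $\|x^{-1}\|=D(x*x^{-1},x*x_0)=D(x_0,x)=\|x\|$ gives inversion-invariance. The main obstacle in the whole argument is the finiteness of $D$ in step (i): one must convert the global equicontinuity hypothesis into a pointwise Lipschitz bound, which requires using the dilation family to shrink an arbitrary pair $(x,y)$ into the uniform-equicontinuity range. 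The rest of the proof is essentially bookkeeping around the central identity and the bijectivity of each $T_\alpha$.
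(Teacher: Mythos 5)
Your steps (ii) and (iii) are essentially the paper's own argument: the dilation identity for $D$ via the reindexing $c=T_{\al}(e)$, left-invariance via $c\mapsto c*a$, and the norm identities $\|x^{-1}\|=\|x\|$ and $\|x*y\|\leq\|x\|+\|y\|$ all appear in the same form there. The genuine problem is in step (i). You invoke \emph{uniform} equicontinuity: a single $\de>0$ such that $d(u,v)<\de$ forces $d(c*u,c*v)<1$ for all $c$, regardless of where the pair $(u,v)$ sits in $X$. The hypothesis only grants equicontinuity in the pointwise sense (for each base point $x$ and each $\eps$ there is a $\de$ depending on $x$), and that is all the paper's proof uses --- indeed the companion statement in Section 3 explicitly weakens this to equicontinuity at $x_0$ alone. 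Your finiteness argument applies $T_{\al}$ with $\al=\de/(2d(x,y))$ to push $x$ and $y$ close \emph{to each other}, but the resulting pair $(T_{\al}(x),T_{\al}(y))$ lands at an uncontrolled location in $X$, so pointwise equicontinuity supplies no usable $\de$ there; the choice of $\al$ then depends on $\de$, which depends on the location, which depends on $\al$. The same issue undermines your claimed Lipschitz bound $D\leq(2/\de)d$: under the stated hypothesis one should only expect topological, not Lipschitz, equivalence (the paper proves exactly that, with a $\de$ depending on $x$).

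The repair is to normalize the base point before shrinking, exactly as the paper does. Since $\sup_{c}d(c*x,c*y)=\sup_{e}d(e*x_0,e*(x^{-1}*y))$ by the substitution $c=e*x^{-1}$, finiteness of $D(x,y)$ reduces to finiteness of $D(x_0,z)$ with $z=x^{-1}*y$. Equicontinuity \emph{at $x_0$} gives a neighborhood $U$ of $x_0$ on which $\sup_c d(c*x_0,c*w)$ is bounded; one then chooses $\ga>0$ with $T_{\ga}(z)\in U$ (using Proposition \ref{prop: x-function is cont}, since $T_{\ga}(z)\to x_0$ as $\ga\to 0$) and applies your central identity to transfer the bound from $(x_0,T_{\ga}(z))$ back to $(x_0,z)$ at the cost of a factor $1/\ga$. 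With that modification your outline goes through; without it, step (i) proves the proposition only under a strictly stronger hypothesis than the one stated.
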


\begin{proof} 

It is easy to see that, as long as we can show that $\{d(c*x,c*y)|\fa c\in X\}$ is bounded for each $(x,y)\in X\ti X$, then $D$ is well-defined.\\ 

Let any $x$ in $X$ be given. As the collection $\{\rho_x\}_{x\in X}$ is equicontinuous with respect to $d$, there exists some open neighborhood, $U$ of $x_0$, such that $\{d(c*x_0,c*z)|\fa c\in X\}$ is bounded for each $z\in U$. Now by Proposition \ref{prop: x-function is cont}, there exists some $\ga>0$ such that $T_{\al}(x)\in U$, so $\{d(c*x_0,c*T_{\ga}(x))|\fa c\in X\}$ is bounded. But this set can be reexpressed as:

$$=\{d(c*T_{\ga}(x_0),c*T_{\ga}(x))|\fa c\in X\}$$

$$=\{d(T_{\ga}(T_{\ga}^{-1}(c)*x_0),T_{\ga}(T_{\ga}^{-1}(c)*x)|\fa c\in X\}$$

As $\ga>0$, $T_{\ga}$ is a bijection on $X$, so (with the substitution $e=T_{\ga}^{-1}(c)$), this set is just:

$$=\{d(T_{\ga}(e*x_0),T_{\ga}(e*x)|\fa e\in X\}$$

$$=\{\ga d(e*x_0,e*x)| \fa e\in X\}$$

So the set $\{\ga d(e*x_0,e*x)| \fa e\in X\}$ must be bounded. As $\ga>0$, this implies that $\{d(e*x_0,e*x)| \fa e\in X\}$ must be bounded, $\fa x\in X$. Now, let any $x,y\in X$ be given. By what we have just shown, $\{d(e*x_0,e*(x^{-1}*y))|\fa e\in X\}$ is bounded. Multiplication by any fixed element of $X$ is a bijection, so, letting $c$ denote $e*x^{-1}$, we have that 

$$\{d(e*x_0,e*(x^{-1}*y))|\fa e\in X\}$$

$$=\{d((c*x)*x_0,(c*x)*(x^{-1}*y)| \fa c\in X\}$$

$$=\{d(c*x,c*y)| \fa c\in X\}$$

and hence this final set is also bounded, $\fa (x,y)\in X\ti X$, meaning $D$ is well-defined.\\

That $D$ is symmetric follows from the fact that $d$ is symmetric.\\

If $x=y$, then $D(x,y)=\sup\{d(c*x,c*y)| \fa c\in X\}=\sup\{d(c*x,c*x)| \fa c\in X\}=\sup\{0\}=0$. Similarly, if $D(x,y)=0$, then $0\leq d(x,y)=d(x_0*x,x_0*y)\leq \sup\{d(c*x,c*y)| \fa c\in X\}=0$, so $d(x,y)=0\Ri x=y$, meaning $D$ satisfies the identity of indiscernables as well.\\

And for any $x,y,z\in X$, we have $d(c*x,c*z)\leq d(c*x,c*y)+d(c*y,c*z), \fa c\in X$, so:

$$D(x,z)=\sup\{d(c*x,c*y)| \fa c\in X\}\leq \sup\{d(c*x,c*y)+d(c*y,c*z)| \fa c\in X\}$$

$$\leq \sup\{d(e*x,e*y)| \fa e\in X\}+\sup\{d(f*y,f*z)| \fa f\in X\}=D(x,y)+D(x,z)$$

meaning $D$ satisfies the triangle inequality as well. Hence, $D$ is a metric on $X$.\\

To show that $D$ induces the same topology on $X$ as $d$, let $x$ be any point in $X$ and $\eps$ be any positive real. As $D(x,y)<\eps\Ri d(x,y)=d(x_0*x,x_0*y)\leq D(x,y)<\eps$, we have $x\in B_D(x;\eps)\subset B_d(x;\eps)$, meaning $D$ induces a topology finer than that induced by $d$. At the same time, as the collection $\{\rho_x\}_{x\in X}$ is equicontinuous with respect to $d$, there exists some $\de>0$ such that $d(c*x,c*y)<\frac{\eps}{2}, \fa c\in X$ and for all $y\in B_d(x;\de)$, meaning $D(x,y)=\sup\{d(c*x,c*y)| \fa c\in X\}\leq \frac{\eps}{2}<\eps$. So $x\in B_d(x;\de)\subset B_D(x;\eps)$, meaning $d$ induces a topology finer than that induced by $D$. So $D$ and $d$ induce equivalent topologies on $X$.\\

Next, for any $x,y\in X$ and any $\al\in (0,\nf)$, we have:

$$D(T_{\al}(x),T_{\al}(y))=\{d(c*T_{\al}(x),c*T_{\al}(y))|\fa c\in X\}$$

$$=\sup\{d(T_{\al}(T_{\al}^{-1}(c)*x),T_{\al}(T_{\al}^{-1}(c)*y)|\fa c\in X\}$$

$$=\sup\{d(T_{\al}(e*x),T_{\al}(e*y)|\fa e\in X\}$$

$$=\sup\{\al d(e*x,e*y)|\fa e\in X\}$$

$$=\al D(x,y)$$

where in the $3$rd equality, $e$ represents $T_{\al}^{-1}(c)$. It follows directly that $\{T_{\al}\}_{\al\in [0,\nf)}$ is a dilation family on $(X,D,x_0)$.\\

Finally, we show that $D$ induces a group norm on $X$. Given any $x,y\in X$ and any $z\in X$, we have:

$$D(z*x,z*y)=\sup\{d(c*(z*x),c*(z*y))| \fa c\in X\}$$

$$=\sup\{d((c*z)*x,(c*z)*y)| \fa c\in X\}$$

$$=\sup\{d(e*x,e*y)|\fa e\in X\}$$

$$=D(x,y)$$

so $D$ is (left) translation-invariant. For any $x\in X$, we now have:

$$||x||=D(x,x_0)=D(x_0,x)$$

$$=D(x^{-1}*x_0,x^{-1}*x)=D(x^{-1},x_0)$$

$$=||x^{-1}||$$

and for any $x,y\in X$, we have:

$$||x*y||=D(x*y,x_0)=D(x^{-1}*(x*y),x^{-1}*x_0)$$

$$=D(y,x^{-1})\leq D(y,x_0)+D(x_0,x^{-1})$$

$$=D(y,x_0)+D(x^{-1},x_0)=||y||+||x^{-1}||$$

$$=||x||+||y||$$

making the induced norm a group norm as well.\\ \end{proof} 

\section{Links with Marius Buliga's work}

First, some results here may be phrased as telling us when metric spaces are 'metric cones'. As defined by M. Buliga (see \cite{intrinsic}, page $10$, Def $2.17$), a metric cone is basically a pointed locally compact metric space $(X,d,x_0)$ with an open ball about $x_0$ that admits dilations into itself about $x_0$ of all scales in $(0,1]$ \cite{intrinsic}. This is somewhat weaker than the condition of a space possessing a dilation family.\\

So we have:

\begin{prop}
\label{prop: metric cone existence - locally compact metric case} Let $(X,d)$ be a locally compact metric space. Suppose for some $x_0$ in $X$ there exists a continuous action $F:[0,\nf)\ti X\ri X$ with $F(0,x)=x_0, \fa x\in X$ ($F(1,x)=x, \fa x\in X$, as well). If $d(F(\al,x),x_0)< d(x,x_0), \fa \al\in [0,1), x\in X-\{x_0\}$, then there exists a metric, $D$ on $X$, such that $D$ and $d$ induce the same topologies on $X$ and $(X,D,x_0)$ is a metric cone.\end{prop}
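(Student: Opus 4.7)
The plan is to invoke Corollary \ref{co: dilation family existence - locally compact metric case} to realize $X$ as a metrizable cone and then transport a cone metric from Proposition \ref{prop: metrizable cones}.

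First, I would verify that the continuous action $F$ corresponds to a continuous monoidal monomorphism $\tilde F : ([0,\nf),\ti) \ri (C_{x_0}(X),\circ)$ defined by $\tilde F(\al)(x) = F(\al,x)$. The monoidal property is the action axiom, and continuity follows from the compact-open adjunction using local compactness of $X$. For injectivity, observe that $\tilde F(1/\be)\circ \tilde F(\be) = \tilde F(1) = \tr{Id}_X$ makes every $\tilde F(\be)$ with $\be>0$ a bijection with inverse $\tilde F(1/\be)$; if $\tilde F(\al) = \tilde F(\be)$ with $0<\al<\be$, composing with $\tilde F(1/\be)$ yields $\tilde F(\al/\be) = \tr{Id}_X$, contradicting the strict shrinking hypothesis at any $x\ne x_0$. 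The case $\al = 0 < \be$ would force $\tilde F(\be) = O_{x_0}$ and then $\tr{Id}_X = O_{x_0}$, giving the trivial case $X=\{x_0\}$, which is immediate.

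Applying Corollary \ref{co: dilation family existence - locally compact metric case} and choosing $\eps>0$ small enough that $\overline{B_d(x_0,\eps)}$ is compact, the map $f':\con_{x_0}([0,\nf),\tr{Bd}(B_d(x_0,\eps))) \ri X$ sending $(a,c)\mapsto F(a,c)$ is a homeomorphism. Let $C = \tr{Bd}(B_d(x_0,\eps))$; as a closed subset of a compactum, $C$ is compact and has finite $d$-diameter, so I would rescale to obtain a metric $d_C$ on $C$ with $\tr{diam}(d_C) \le 2$. I would then define $D'$ on the entire cone $\con_{x_0}([0,\nf),C)$ by the formula from Proposition \ref{prop: metrizable cones}: $D'(x_0,x_0) = 0$, $D'((a,c),x_0) = a$, and $D'((a,c_1),(b,c_2)) = |a-b| + \min(a,b)\,d_C(c_1,c_2)$. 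Re-reading the triangle inequality verification in the proof of Proposition \ref{prop: metrizable cones} confirms that only the diameter bound $d_C\le 2$ (and not the restriction $a,b\le 1$) is essential, so $D'$ is a metric on the extended cone inducing the cone topology, and the maps $F'(\al)(a,c) = (\al a,c)$, $F'(0) = O_{x_0}$, dilate by scale $\al$ under $D'$ for every $\al \in [0,\nf)$.

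Finally, I would transport $D'$ through $f'$ by setting $D(x,y) = D'((f')^{-1}(x),(f')^{-1}(y))$. Since $f'$ is a homeomorphism and $D'$ induces the cone topology, $D$ is a metric on $X$ inducing the original topology. The open ball $B_{D'}(x_0,1) = \{x_0\}\cup(0,1)\ti C$ is mapped into itself by each $F'(\al)$ for $\al\in(0,1]$, so conjugating by $f'$ shows that the corresponding ball $B_D(x_0,1)\subset X$ admits dilations of every scale in $(0,1]$ fixing $x_0$, which is exactly the Buliga metric-cone condition. The main technical step is re-examining the triangle inequality in the proof of Proposition \ref{prop: metrizable cones} to confirm that the diameter bound, not the height restriction, is what is actually needed to extend the construction from $[0,1]$ to $[0,\nf)$.
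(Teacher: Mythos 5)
The paper states this proposition without proof, presenting it as a consequence of Corollary \ref{co: dilation family existence - locally compact metric case} together with Proposition \ref{prop: metrizable cones}, and your proposal is exactly that intended route, carried out correctly (including the verification that the action curries to a continuous monoidal monomorphism into $C_{x_0}(X)$ and that the transported ball $B_D(x_0;1)$ admits dilations of all scales in $(0,1]$). The one substantive point you add --- that the triangle-inequality argument in Proposition \ref{prop: metrizable cones} uses only the diameter bound on $C$ and not the height restriction $a,b\le 1$, so the metric $D'$ extends to the unbounded cone $\con_{x_0}([0,\nf),C)$ --- is checked correctly and is genuinely needed, since that proposition is stated only for the truncated cone.
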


There is similarly the following result relating to 'normed local groups with dilations' (from \cite{intrinsic}, page $13$, Def $3.2$):

\begin{prop}
\label{prop: local group dilations - locally compact metric case} Let $(X,d)$ be a locally compact metric space such that $X$ is a local group with identity $x_0$. Suppose there exists a continuous action $F:[0,\nf)\ti X\ri X$ with $F(0,x)=x_0, \fa x\in X$ ($F(1,x)=x, \fa x\in X$, as well). If $d(F(\al,x),x_0)< d(x,x_0), \fa \al\in [0,1), x\in X-\{x_0\}$, then there exists a metric, $D$ on $X$, such that $D$ and $d$ induce the same topologies on $X$ and $(X,||\cd ||, \de)$ is a normed local group with dilations with a homogeneous norm where $||x||$ is defined as $D(x,x_0), \fa x\in X$ and $\de_{\eps}(x)=F(\eps,x), \fa (\eps,x)\in [0,\nf)\ti X$.\end{prop}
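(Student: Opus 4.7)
The plan is to leverage the preceding Proposition \ref{prop: metric cone existence - locally compact metric case} to produce the metric $D$ and then verify the additional compatibility axioms required to upgrade the metric cone structure to a normed local group with dilations.

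First, I would note that the shrinking hypothesis together with the action property forces the induced map $\al \mapsto F(\al, \cdot)$ from $[0,\nf)$ into $C_{x_0}(X)$ to be a continuous monoidal monomorphism (if $F(\al_1) = F(\al_2)$ with $\al_1 < \al_2$, then applying the action property gives $F(\al_1/\al_2)(y) = y$ for $y$ in the image of $F(\al_2)$, contradicting the shrinking hypothesis for any such $y \neq x_0$). This puts us in the setting of Corollary \ref{co: dilation family existence - locally compact metric case}: for every sufficiently small $\eps > 0$ there is a homeomorphism $f' : \con_{x_0}([0,\nf), \tr{Bd}(B(x_0;\eps))) \ri X$ given by $(\al, c) \mapsto F(\al, c)$. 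Pulling back the cone metric from Proposition \ref{prop: metrizable cones} (applied to $\tr{Bd}(B(x_0;\eps))$, rescaled to have diameter at most $2$) along $(f')^{-1}$ yields a metric $D$ on $X$ that induces the original topology and under which $\{\de_\eps\}_{\eps \in [0,\nf)}$, defined by $\de_\eps := F(\eps, \cdot)$, is a linear dilation family about $x_0$.

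Second, I would define $\|x\| := D(x, x_0)$. Homogeneity is automatic, since $\|\de_\eps(x)\| = D(\de_\eps(x), \de_\eps(x_0)) = \eps D(x, x_0) = \eps \|x\|$. Non-degeneracy, symmetry and subadditivity of $\|\cdot\|$ — restricted to the neighborhood of $x_0$ on which inversion and multiplication are defined as local group operations — follow directly from the corresponding properties of the metric $D$, since in a local group the norm axioms are only required on a neighborhood of the identity. Third, I would verify the axioms of a normed local group with dilations from \cite{intrinsic} Def 3.2: the action, identity, composition, and collapse properties of $\de$ are inherited immediately from $F$; continuity with respect to $D$ is inherited from continuity with respect to $d$ because the two metrics are topologically equivalent; the local group axioms themselves are topological and so survive the change of metric; and the remaining compatibility conditions between $\de$ and the local group operation $*$ — typically expressed in terms of the asymptotic behavior of $\de_\eps^{-1}(\de_\eps(x) * \de_\eps(y))$ as $\eps \ri 0$ — follow from the joint continuity of $*$ and $F$ on $X$ together with the fact that the shrinking hypothesis forces $\de_\eps$ to uniformly compress compact neighborhoods of $x_0$ onto arbitrarily small ones (via Propositions \ref{prop: x-function is cont} and \ref{prop: continuous action}).

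The principal obstacle is the last clause of Step 3: the metric $D$ built in Step 1 depends only on the cone geometry and carries no a priori information about $*$, so making the interaction between $\de$ and $*$ explicit — in particular isolating the limiting operation demanded by Buliga's definition and verifying its existence on a neighborhood of $x_0$ — is the delicate part of the argument. What makes this step tractable, and what distinguishes this proposition from the normed conical group setting alluded to in the introduction, is that Buliga's ``normed local group with dilations'' is genuinely weaker than a ``normed conical group'': the present setup does not require the equicontinuity-of-translations machinery of Proposition \ref{prop: translation-invariant metric}, only an asymptotic compatibility that the cone metric $D$ and the continuity of the operations are already strong enough to supply.
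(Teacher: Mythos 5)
The paper offers no proof of this proposition; it is evidently meant to follow from Corollary \ref{co: dilation family existence - locally compact metric case} together with Proposition \ref{prop: metrizable cones}, and your Steps 1 and 2 reconstruct that intended route correctly (including the observation that the shrinking hypothesis forces $\al\mapsto F(\al,\cdot)$ to be a monomorphism into $C_{x_0}(X)$). One caveat there: Proposition \ref{prop: metrizable cones} only builds the metric on $\con_{x_0}([0,1],C)$, whereas you need it on $\con_{x_0}([0,\nf),\tr{Bd}(B(x_0;\eps)))$; the formula $|a-b|+\min(a,b)d(c_1,c_2)$ does extend to $a,b\in[0,\nf)$, but you should say so and verify that the triangle-inequality computation survives without the bound $a,b\leq 1$.

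The genuine gap is in Step 3, at exactly the point you flag as delicate and then wave away. Two of the axioms of a normed local group with dilations do not follow from ``joint continuity of $*$ and $F$'': (i) local subadditivity $\|x*y\|\leq\|x\|+\|y\|$, and (ii) the existence of the limits $\lim_{\eps\ri 0}\de_{\eps}^{-1}(\de_{\eps}(x)*\de_{\eps}(y))$. For (i): the triangle inequality for $D$ gives $D(x*y,x_0)\leq D(x*y,z)+D(z,x_0)$, but converting this into subadditivity of the norm requires $D$ to be (locally) left-invariant under $*$ --- which is precisely what the equicontinuity machinery of Proposition \ref{prop: translation-invariant metric} exists to supply; the cone metric of Step 1 is built with no reference to $*$ and has no reason to be translation-invariant, so your remark that this machinery is not needed here is backwards. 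For (ii): $\de_{\eps}(x)*\de_{\eps}(y)\ri x_0$ while $\de_{\eps}^{-1}=\de_{1/\eps}$ magnifies distances from $x_0$ by $1/\eps$, so the expression is a $0\cdot\nf$ indeterminate form, and continuity provides no rate and hence no convergence. Concretely, take $X=\ma R$, $x_0=0$, $F(\eps,x)=\eps x$, and $x*y=f^{-1}(f(x)+f(y))$ for a monotone homeomorphism $f$ fixing $0$ that oscillates near $0$ (e.g.\ $f(x)=x(2+\sin\log|x|)$, $f(0)=0$): every hypothesis of the proposition holds, yet $\frac{1}{\eps}\,(\eps x*\eps y)$ need not converge as $\eps\ri 0$. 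So either additional hypotheses tying $F$ to $*$ are required, or a different argument; as written your proof does not establish the compatibility axioms --- and, it should be said, neither does the paper, which supplies no argument at all for this step.
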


There is also the following result for 'normed conical groups' (these are normed groups equipped with dilation families such that the dilations are group homomorphisms - see \cite{intrinsic}, page $14$, Def $3.3$):

\begin{prop}
\label{prop: normed group dilations - metric case} Let $(X,d)$ be a metric space such that $X$ is a group with the operation $*$ and identity $x_0$. Suppose also that there is a family of dilations about $x_0$ in $(X,d)$, $\{T_{\al}\}_{\al\in [0,\nf)}$ where each  $T_{\al}$ is a group homomorphism. If the family of translations $\{\rho_c\}_{c\in X}$ where $\rho_c(x)=c*x, \fa c,x\in X$ is equicontinuous at $x_0$, then the function $D:X\ti X\ri \ma R$ defined as $$D(x,y)=\sup\{d(c*x,c*y)|c\in X\}$$ is a translation-invariant metric on $X$ that induces the same topology on $X$ as $d$ and the norm induced by $D$ makes $X$ a normed conical group.\end{prop}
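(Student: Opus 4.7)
The plan is to obtain this result almost entirely by an application of Proposition \ref{prop: translation-invariant metric}, whose conclusion already delivers that $D$ is a translation-invariant metric inducing the same topology as $d$, that $\{T_{\al}\}_{\al \in [0,\nf)}$ remains a dilation family on $(X,D,x_0)$, and that the induced function $||x|| = D(x,x_0)$ is a homogeneous group norm on $X$. The only gaps I need to fill are (i) reconciling the weaker equicontinuity hypothesis used here (equicontinuity at $x_0$) with the full equicontinuity assumed in Proposition \ref{prop: translation-invariant metric}, and (ii) verifying that the resulting structure matches Buliga's definition of a normed conical group from \cite{intrinsic}.

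For the first gap, I would extend equicontinuity at $x_0$ to equicontinuity on all of $X$. Given $\eps > 0$, I choose $\de > 0$ via equicontinuity at $x_0$ so that $d(x_0, y') < \de$ implies $d(c, c*y') < \eps$ for every $c \in X$. For any $x,y \in X$, the factorization $c*y = (c*x)*(x^{-1}*y)$ gives $d(c*x, c*y) = d((c*x), (c*x)*(x^{-1}*y))$, which is bounded by $\eps$ uniformly in $c$ provided $d(x_0, x^{-1}*y) < \de$. Thus I only need to control $d(x_0, x^{-1}*y)$ in terms of $d(x,y)$, i.e. to show that $\rho_{x^{-1}}$ is continuous at $x$. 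Since each $T_{\al}$ is a continuous group homomorphism and Proposition \ref{prop: continuous action} guarantees continuity of the dilation action, the dilations can be used as transport maps to propagate identity-level equicontinuity to every other point.

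Second, I would invoke Proposition \ref{prop: translation-invariant metric} to obtain $D$, the topological equivalence with $d$, translation invariance, the dilation family structure on $(X,D,x_0)$, and the homogeneous group norm $||\cdot||$. To then conclude a normed conical group structure, I would observe that by hypothesis each $T_{\al}$ is a group homomorphism on $(X,*)$ and that the scaling identity $D(T_{\al}(x), T_{\al}(y)) = \al D(x,y)$ immediately gives the homogeneity $||T_{\al}(x)|| = \al ||x||$. Together with the group-norm properties already furnished by Proposition \ref{prop: translation-invariant metric} (the triangle inequality $||x*y|| \leq ||x|| + ||y||$ and invariance under inverses), this matches precisely the definition of a normed conical group in \cite{intrinsic}.

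The main obstacle is the first gap: since $d$ is not a priori translation-invariant, transporting equicontinuity from $x_0$ to an arbitrary $x \in X$ is not automatic, and likely requires a careful argument using the continuous dilation action together with the homomorphism property of the $T_{\al}$ to locally ``linearize'' the group structure near $x$. Once this transport step is in place, the rest of the proof is a routine assembly of the consequences of Proposition \ref{prop: translation-invariant metric} with the additional homomorphism hypothesis.
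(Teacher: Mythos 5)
Your overall strategy --- deduce everything from Proposition \ref{prop: translation-invariant metric} and then check Buliga's axioms --- is exactly the route the paper intends: Proposition \ref{prop: normed group dilations - metric case} is stated in Section 3 without proof, as a consequence of that earlier result, and your verification of the conical-group structure (homogeneity via $||T_{\al}(x)||=D(T_{\al}(x),T_{\al}(x_0))=\al||x||$, combined with the group-norm properties that Proposition \ref{prop: translation-invariant metric} already supplies) is correct and is all that is needed for that half. To your credit, you have also noticed something the paper glosses over: the hypotheses do not match. Proposition \ref{prop: translation-invariant metric} assumes $X$ is a \emph{topological} group and that $\{\rho_c\}_{c\in X}$ is equicontinuous at every point, whereas the present statement assumes only a group with a metric and equicontinuity at $x_0$; and the paper's proof of Proposition \ref{prop: translation-invariant metric} genuinely uses equicontinuity at an arbitrary point $x$ when it produces $\de$ with $d(c*x,c*y)<\frac{\eps}{2}$ for all $c$ and all $y\in B_d(x;\de)$.

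The genuine gap in your proposal is that you do not close this mismatch. Your reduction is right: writing $c*y=(c*x)*(x^{-1}*y)$, equicontinuity at $x_0$ propagates to the point $x$ exactly when $y\mapsto x^{-1}*y$ is continuous at $y=x$, i.e.\ when $\rho_x$ is open at $x_0$. But equicontinuity at $x_0$ only gives inclusions of the form $\rho_c(B_d(x_0;\de))\subset B_d(c;\eps)$, which is the wrong direction for openness, and the dilations do not obviously supply what is missing: the natural transport computation, $\sup_c d(c*x,c*y)=\al\,\sup_e d(e*T_{\frac{1}{\al}}(x),e*T_{\frac{1}{\al}}(y))\leq \al\left(D(T_{\frac{1}{\al}}(x),x_0)+D(x_0,T_{\frac{1}{\al}}(y))\right)$, yields a bound proportional to $\max(d(x,x_0),d(y,x_0))$ rather than one tending to $0$ as $d(x,y)\ri 0$ --- this is precisely how the paper establishes that $D$ is finite, but it cannot establish that $d$-convergence implies $D$-convergence away from $x_0$. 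So ``use the dilations as transport maps'' is, as you yourself suspect, a hope rather than an argument. To make the proof complete you should either add the hypothesis that left multiplication is continuous in the second variable (so that $\rho_{x^{-1}}$ is continuous at $x$ and your factorization closes the gap), or assume equicontinuity at every point as in Proposition \ref{prop: translation-invariant metric}. Note that this defect is inherited from the paper itself, which states the proposition under the weaker hypothesis but only proves the stronger-hypothesis version.
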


\bibliographystyle{plain}
\bibliography{bibrefs.bib}

\end{document}